\documentclass[]{amsart}
\usepackage{amssymb,mathdots}
\usepackage{mathrsfs}
\usepackage[applemac]{inputenc}
\usepackage{amsmath, graphicx}
\usepackage{wrapfig}
\usepackage{enumerate}
\usepackage{url}

%
%


\def\scaledpicture#1by#2(#3scaled#4){{
\dimen0=#1  \dimen1=#2
\divide\dimen0 by 1000 \multiply\dimen0 by #4
\divide\dimen1 by 1000 \multiply\dimen1 by #4
\picture \dimen0 by \dimen1 (#3 scaled #4)}}
\def\dfigure#1by#2(#3scaled#4offset#5:#6)
    {\medskip
     \vglue 2mm minus 2mm
     $$
       \hbox{
         \hglue#5
         {\scaledpicture #1 by #2 (#3 scaled #4)}
       }
     $$
     \par\goodbreak
     \vglue 2mm minus 2mm
     \medskip}

\def\qmod#1#2{{\hbox{}^{\displaystyle{#1}}}\!\big/\!\hbox{}_{
\displaystyle{#2}}}

\def\resto#1#2{{
#1\hskip 0.4ex\vline_{\hskip 0.2ex\raisebox{-0,2ex}
{{${\scriptstyle #2}$}}}}}




\def\C{{\mathbb C}}

\def\N{{\mathbb N}}

\def\P{{\mathbb P}}

\def\R{{\mathbb R}}
\def\Z{{\mathbb Z}}

\def\qed {\hfill\vrule height6pt width6pt depth0pt \smallskip}

\def\map{\longrightarrow}
\def\textmap#1{\mathop{\vbox{\ialign{
                                  ##\crcr
      ${\scriptstyle\hfil\;\;#1\;\;\hfil}$\crcr
      \noalign{\kern 1pt\nointerlineskip}
      \rightarrowfill\crcr}}\;}}
\def\bigtextmap#1{\mathop{\vbox{\ialign{
                                  ##\crcr
      ${\hfil\;\;#1\;\;\hfil}$\crcr
      \noalign{\kern 1pt\nointerlineskip}
      \rightarrowfill\crcr}}\;}}
      

\newcommand{\cal}{\mathcal}
\def\textlmap#1{\mathop{\vbox{\ialign{
                                  ##\crcr
      ${\scriptstyle\hfil\;\;#1\;\;\hfil}$\crcr
      \noalign{\kern-1pt\nointerlineskip}
      \leftarrowfill\crcr}}\;}}

\def\fg{{\mathfrak f}}

\def\sg{{\mathfrak s}}

\newtheorem{sz}{Satz}
\newtheorem{thry}[sz]{Theorem}
\newtheorem{pr}[sz]{Proposition}
\newtheorem{re}[sz]{Remark}
\newtheorem{co}[sz]{Corollary}
\newtheorem{dt}[sz]{Definition}
\newtheorem{lm}[sz]{Lemma}

\begin{document}
 
\def\tr{\mathrm {Tr}}
\def\End{\mathrm {End}}
\def\Aut{\mathrm {Aut}}
\def\Spin{\mathrm {Spin}}
\def\U{\mathrm{U}}
\def\SU{\mathrm {SU}}
\def\SO{\mathrm {SO}}
\def\PU{\mathrm {PU}}
\def\GL{\mathrm {GL}}
\def\spin{\mathrm {spin}}
\def\u{\mathrm {u}}
\def\su{\mathrm {su}}
\def\so{\mathrm {so}}
\def\pu{\mathrm {pu}}
\def\Pic{\mathrm {Pic}}
\def\Iso{\mathrm {Iso}}
\def\NS{\mathrm{NS}}
\def\deg{\mathrm {deg}}
\def\Hom{\mathrm{Hom}}
\def\Herm{\mathrm{Herm}}
\def\Vol{{\rm Vol}}
\def\pf{{\bf Proof: }}
\def\id{ \mathrm{id}}
\def\Im{\mathrm{Im}}
\def\im{\mathrm{im}}
\def\rk{\mathrm {rk}}
\def\ad{\mathrm {ad}}
\def\spc{\mathrm{Spin}^c}
\def\U2{\mathrm{U(2)}}
\def\niq{=\kern-.18cm /\kern.08cm}
\def\Ad{\mathrm {Ad}}
\def\RSU{\R\mathrm{SU}}
\def\ad{{\rm ad}}
\def\dva{\bar\partial_A}
\def\da{\partial_A}
\def\p{{\rm p}}
\def\rk{\mathrm{rank}}
 \def\mult{\mathrm{mult}}

\def\oo{{\scriptstyle{\cal O}}}
\def\ooo{{\scriptscriptstyle{\cal O}}}
\def\subsetint{{\  {\subset}\hskip -2.45mm{\raisebox{.28ex}
{$\scriptscriptstyle\subset$}}\ }}

\title
{Intrinsic signs and  lower bounds in real algebraic geometry}
\author{Christian Okonek \& Andrei Teleman}
%
 
%
\maketitle

\setcounter{section}{-1}

 \section{Introduction}

A classical result in complex algebraic geometry states that any smooth cubic surface in $\P^3_\C$ contains precisely 27 lines. It is natural to investigate the analogous  problem   in real algebraic geometry: how many {\it real} lines contains a {\it real} cubic surface in $\P^3_\R$. It is known that a real cubic surface in $\P^3_\R$ contains 27, 15, 7 or 3 real lines. A less known result  due to Segre states that on a real cubic surface there exists two kinds of real lines:   {\it elliptic} and {\it hyperbolic} lines. These two kinds of real lines are defined  in an  {\it intrinsic}  way, i.e.,  their definition does not depend on any choices of orientation  data. Using this separation into kinds Segre shows:
\begin{pr} (\cite{Se})\label{seg} Let $X$ be a smooth real cubic, $e$ the number of elliptic real lines, and $h$ the number of hyperbolic real lines on $X$. Then the following four cases can occur:
\begin{center}
\begin{tabular}{|c|c|c|c|}
\hline  
h&e&h+e&h-e\\ 
\hline
15&12&27&3\\
\hline
9&6&15&3\\
\hline
5&2&7&3\\\hline
3&0&3&3\\
\hline
\end{tabular}\ 
\end{center}
\end{pr}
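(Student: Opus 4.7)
The plan is to view $h-e$ as a \emph{signed} count of real lines whose total is a topological invariant of the space of smooth real cubics, to compute this invariant on one explicit surface, and then to enumerate the possible values of $h+e$.

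First I would recast the problem. Lines in $\P^3_\C$ form the Grassmannian $G(2,4)$, and lines contained in a cubic $X=\{F=0\}$ are exactly the zeros of the section $s_F\in H^0(G(2,4),\mathrm{Sym}^3\cal{U}^\vee)$ induced by $F$, where $\cal{U}$ is the tautological subbundle; the Cayley--Salmon count is $c_4(\mathrm{Sym}^3\cal{U}^\vee)=27$. Restricting to the real locus gives a real section of a real rank-$4$ bundle over a real $4$-manifold whose real zeros are precisely the real lines on $X$. The core step is to show that, although $G(2,4)_\R$ and $\mathrm{Sym}^3\cal{U}^\vee_\R$ are individually non-orientable, the difference of their orientation bundles is canonically trivial, so the enumerative problem carries a relative orientation. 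At each transverse real zero $L$ this produces an intrinsic sign $\sigma(L)\in\{\pm 1\}$, which I would identify with Segre's dichotomy: $\sigma(L)=+1$ for hyperbolic and $\sigma(L)=-1$ for elliptic lines.

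With this identification, the signed count $\sum_{L}\sigma(L)=h-e$ is the degree of an oriented section and is therefore a deformation invariant on the space of smooth real cubics. To fix its value I would evaluate it on the Clebsch diagonal cubic, which contains all $27$ lines real with (as Segre computes directly) $h=15$ and $e=12$, giving $h-e=3$. Deformation invariance then yields $h-e=3$ universally. The four rows of the table follow upon combining this identity with the classical Schl\"afli--Segre fact that $h+e\in\{3,7,15,27\}$ for a smooth real cubic in $\P^3_\R$.

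I expect the main obstacle to be the construction of the canonical relative orientation and the geometric identification of the resulting sign with Segre's elliptic/hyperbolic distinction. Concretely this reduces to a wall-crossing check: in a generic real one-parameter family of cubics in which two real lines collide and then disappear as a complex conjugate pair, the two colliding lines must carry opposite $\sigma$-signs. Verifying this amounts to a local normal-form computation of the section $s_F$ at the tangential degeneration, and I anticipate it to be the delicate technical step of the argument.
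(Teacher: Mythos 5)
Your route is genuinely different from the paper's: the paper does not reprove this proposition at all, but quotes it from Segre, whose argument goes through the classification of smooth real cubic surfaces and a direct study of the two kinds of lines; the new observation the paper extracts from the table is only that $h-e=3$ in every case. You invert this: you take the classical possible values of $h+e$ as input and propose to \emph{derive} the split $h-e=3$ from the relative-orientation/signed-count formalism (which is in fact the machinery the paper develops in its later sections for $S^k(U^\vee)$ over $G_2(V)$, $k=2n-5$). As a strategy this is coherent, and it cleverly sidesteps the paper's hardest computation (the sign-controlled evaluation $e_{S^3(U^\vee)}=1\cdot 3=3$ of Theorem \ref{esym}) by evaluating the invariant once on the Clebsch cubic.

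However, there are concrete gaps. First, the identification of the intrinsic sign with Segre's elliptic/hyperbolic dichotomy is precisely the delicate content, and you only assert it; the paper needs the whole computation of the intrinsic derivative $D_W(s_f)$, its determinant (the resultant/Sylvester matrix description for $n=4$, $k=3$) and the quadric $Q$ to make this identification, and without it your signed count gives $|h-e|=3$ for \emph{some} intrinsic partition of the real lines, not for Segre's. (Note also that your convention is opposite to the paper's: there, $\epsilon_{l,\theta_{\mathrm{can}}}=+1$ is called \emph{elliptic}.) Second, ``deformation invariance on the space of smooth real cubics'' is not a valid justification, because that space is disconnected (the five real species), so a value computed on the Clebsch cubic does not propagate by deformation alone; what you actually need is the statement that for \emph{any} section with transverse zeros the signed count equals $\deg(\xi,\theta_{\mathrm{can}})$, an invariant of the relatively oriented bundle itself (Lemma \ref{ld}, formula (\ref{gi2})). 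This in turn requires knowing that every real line on a smooth cubic is a regular zero of $s_f$ — true, via the criterion that the normal bundle of the complexified line is ${\cal O}(-1)$, but you do not address it. Finally, your argument, even completed, establishes only the column $h-e=3$; the admissible values $h+e\in\{27,15,7,3\}$, the realizability of all four rows, and the split $h=15$, $e=12$ on the Clebsch cubic are all imported from Schl\"afli--Segre, so the proposal is a (potentially attractive) rederivation of part of the table rather than an independent proof of the proposition as stated.
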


Apparently the fact that the difference $h-e$ is always equal to 3  has not been noticed before, neither by Segre nor in later publications (see for instance \cite{BS})\footnote{V. Kharlamov has informed us that he together with Fintushin had noticed this fact while working on a forthcoming  article about real lines on projective hypersurfaces (see \cite{FK1}).}. Two important  facts can be noticed using this table:
\begin{enumerate}
\item There exists a non-trivial lower bound for the total number of real lines on a smooth real cubic surface. This lower bound is 3.\vspace{2mm}

Note that one has an obvious lower bound which follows by comparison with the complex situation. Indeed, this method yields an upper bound $h+e\leq 27$, but also the congruence $h+e\equiv  27$ (mod 2) which implies the lower bound $h+e\geq 1$.
\vspace{2mm}
\item The existence of two kinds of real lines, the definition of the two kinds being intrinsic, i.e., independent of any choices of orientation data.
\end{enumerate}
\vspace{2mm}

Starting from these remarks and inspired by the classical problem mentioned above, our article  has the following goals:
\begin{enumerate}
\item we explain a general principle which leads to lower bounds in real algebraic geometry,
\item we explain the reason  for the appearance of  intrinsic signs  in the classical  problem treated by Segre,  showing that the same phenomenon occurs in a large class of  enumerative problems in real algebraic geometry.
\item we illustrate these two principles in the enumerative problem of counting real lines in real hypersurfaces of degree $2m-3$ in $\P^m_\R$.   
\end{enumerate}
\vspace{2mm}
A short summary of our article folows:\\

The first section introduces the canonical Euler class of a real vector bundle. This class is defined for real vector bundles in an  intrinsic way, so  the choice of an orientation is not needed.  We also introduce relatively oriented bundles on smooth manifolds, and the concept of degree for {\it relatively} oriented bundles over closed manifolds. The essential point here is that relative orientability  is a much weaker condition than the orientability of both the bundle and the base manifold (which is the condition usually required  in the literature). At the end we treat the analogous concepts in the case of sphere bundles. \\

In the second section we prove a general lower estimate for the number of zeros  of a section $s$ with isolated zeros in a relatively oriented bundle $\xi$ over a closed manifold. Our result states that
$$\sum_{s\in Z(s)}\mult_x(s)\geq|\deg|(\xi) \ ,
$$
where the multiplicity $\mult_x(s)$ at an isolated zero is defined using algebraic methods.  It is important to note that the formula holds in the  ${\cal C}^\infty$ category and no transversality property is necessary. The result uses a classical inequality in singularity theory (see \cite{EL}) which compares the multiplicity of a smooth germ $(\R^n,0)\to(\R^n,0)$ with an isolated zero at $0\in\R^n$ with its Milnor degree at 0. Our result can be viewed as a global version of the inequality obtained in \cite{EL}.\\

The third section is dedicated to applications of the general theory to the  problem of counting real lines in a hypersurface $Z(f)\subset \P(V)$ defined by a homogeneous polynomial of degree $k:=2n-5$ on an $n$-dimensional real vector space $V$.  These lines correspond bijectively to the zeros of   the  section $s_f\in\Gamma(S^k(U^\vee))$ associated with $f$, where we denote  by $U$ the tautological plane bundle  of  the Grassmannian $G_2(V)$, and the section $s_f$ is obtained by restricting $f$ to the linear 2-planes $W\subset V$  which are the fibres of $U$. We start with a fundamental remark: the bundle $S^k(U^\vee)$ is {\it canonically} relatively oriented. This implies that  to any  regular zero of $s_f$ one can assign a sign  in an {\it intrinsic}  way. Note that both the Grassmannian $G_2(V)$ and the bundle $S^k(U^\vee)$ are non-orientable when $n$ is odd. For the canonical Euler class of $S^k(U^\vee)$ we obtain the simple formula
$$e_{S^k(U^\vee)}=1\cdot 3\cdot\hdots \cdot k \ .
$$
An important difficulty in proving this result is to control the canonical relative orientation of $S^k(U^\vee)$  in order to determine accurately the sign of this  Euler class. There is an important detail here which should be pointed out: the result depends on certain orientation conventions in linear algebra, which must be  fixed and used consistently; for instance one has to fix once for ever  the ``standard" orientation of the symmetric power $S^k(T)$ of an oriented vector space $T$, and the ``standard" orientation  of the morphism space $\Hom(T',T'')$ of two oriented vector spaces $T'$, $T''$.

The computation of this Euler class for $n=5$  has been obtained before in \cite{PSW};   in the general case $|e_{S^k(U^\vee)}|$ has recently been  computed independently by Finashin and Kharlamov  \cite{FK1}, \cite{FK2}. These authors interpret it as  the ``virtual number" of real lines.

As a corollary we obtain  the following lower estimate for the number of lines on the real hypersurface $Z(f)$,  under the (very weak) assumption that the set  ${\cal R}$ of real lines  is finite:
$$\sum_{l\in{\cal R}} \mult_l(s_f)\geq 1\cdot 3\cdot\hdots \cdot k \ .
$$
We have been  informed by V. Kharlamov that he is preparing a survey article together with Finashin in which they will prove a similar result (see \cite{FK2}).
\\

The fourth section deals with the intrinsic sign of a regular real line $l$ (regarded as zero of $s_f$) on a real hypersurface $Z(f)$ of degree $k:=2n-5$ in $\P(V)$, with respect to the canonical relative orientation of ${S^k(U^\vee)}$.  In order to do this we compute  explicitly the intrinsic derivative of the section $s_f$ at point in the zero locus, and give a general formula for its determinant. Writing $l=\P(W)$ for a 2-plane $W\subset V$ we see that if $W$ is a zero of $s_f$, then $f$ induces a   linear map
$$\fg:V/W\to \R[W]_{k-1}
$$
given by $\fg([v])(w):=f(v,\underbrace{w,\dots,w}_\textrm{$k-1$})$.   This map has an interesting geometric interpretation: its zero set $Z(\fg)\subset\P(W)\times\P(V/W)$ is the exceptional divisor of the blow up of $\P(W)\subset Z(f)$. The point is that $\fg$ determines the intrinsic derivative 
$$D_W(s_f):\Hom(W,V/W)=T_W(G_2(\R^{m+1}))\to\R[W]_k$$
via the formula $D_W(s_f)(\phi)(w)=k\fg(\phi(w))(w)$. If $W$ is a regular zero of $s_f$, then $\wedge^{n-2}\fg$ is a non-trivial element of $\Hom(\det (V/W),\wedge^{n-2}(\R[W]_{k-1}))$ and the corresponding point $[\wedge^{n-2}\fg]\in 
\P(\wedge^{n-2}(\R[W]_{k-1}))$ lies in the complement of a hyperquadric $Q\subset \P(\wedge^{n-2}(\R[W]_{k-1})$ whose equation can be written down explicitly. The complement of $Q$ has two connected components $\P(\wedge^{n-2}(\R[W]_{k-1})_\pm$, and the sign associated with $l=\P(W)$ is determined by the component to which $[\wedge^{n-2}\fg]$ belongs. In the case $n=4$, $k=3$ considered by Segre, we obtain an alternative geometric interpretation, which shows that the intrinsic signs which we assign to regular real lines using our  general formalism and the canonical relative orientation of the bundle $S^k(U^\vee)$ correspond to the two kinds (hyperbolic and elliptic) defined by Segre.

Note that   $W$ is a regular zero of $s_f$ if and only if $W\otimes\C$ is a regular zero of the corresponding section on the complex Grassmannian $G_2(V\otimes\C)$, and this happens iff the Grothendieck decomposition of the normal bundle of $\P(W\otimes\C)$ in  the complex hypersurface $Z(f^\C)$  is ${\cal O}_{\P(W\otimes\C)}(-1)^{\oplus(n-3)}$.
\\

We conclude with several remarks:
\vspace{2mm}\\ 
  Our  techniques can be applied to other interesting situations. We have in mind
\begin{enumerate}
\item the enumerative problem  for real lines on  real complete intersections, for instance on a complete intersection $Z(f,g)\subset \P^6_\R$, where $f$, $g$ are  polynomials of degree 3.
\item  applications in Real gauge theory on 2 and 4-dimensional manifolds:  We believe that similar phenomena (a priori lower estimates for the cardinality of certain 0-dimensional moduli spaces, and intrinsic signs associated with regular points in a 0-dimensional moduli space) also  occur in the infinite dimensional gauge theoretical framework. 
\end{enumerate}

Our results lead to the following natural open questions and problems:
\begin{enumerate}
\item  Further geometric interpretations of the sign of $\det(D_W(s_f))$. It would be interesting to have an explicit geometric interpretation of the sign of $\det(D_W(s_f))$ which uses only geometric properties of a hypersurface $Z(f)$ (or of a complete intersection) around a regular real line $l$.
\item Can one develop a wall crossing theory for the numbers $h$, $e$ of real lines with fixed intrinsic sign (1, respectively -1)? Note also that, according to the table obtained by Segre (see Proposition \ref{seg}) for cubics surfaces in $\P^3$ the number $e$ is always even.
\item Develop an orbifold version of the theory, allowing  to get lower bounds for the number of zeros of a section in a real bundle on a compact orbifold.
\end{enumerate}

\section{The canonical Euler class and the degree}

\subsection{The canonical Euler class}
Let $B$ be a topological space, and $\xi$ a real vector bundle of rank $r$ on $B$ whose projection map is $p:E\to B$. We denote by  $\oo_\xi$  the orientation coefficient systems of   $\xi$  regarded as sheaf  over $B$, locally isomorphic with $\underline{\Z}$.   The fibres of this sheaf are
$$ \oo_{\xi,x}=H_r(E_x,E_x\setminus\{0_x\};\Z)\ .
$$
The canonical Euler class of $\xi$ is a cohomology class $e_\xi\in H^r(B;\oo_\xi)$, which is defined in the following way: The Thom class $t_\xi\in H^r(E,E\setminus B;p^*(\oo_\xi))$ of $\xi$ is characterized by the condition that its restriction to any pair $(E_x,E_x\setminus \{0_{E_x}\})$ is the canonical generator of the cyclic group
$$H^r(E_x,E_x\setminus \{0_{E_x}\};\oo_{\xi,x})=\Hom(H_r(E_x,E_x\setminus\{0_x\};\Z),H_r(E_x,E_x\setminus\{0_x\};\Z))\ .$$
 Then one has the inclusions 
$$(B,\emptyset)\textmap{0_\xi}(E,\emptyset)\textmap{i_E}  (E,E\setminus B) \ ,
$$
and the canonical Euler class is defined \cite{EM} by:
$$e_\xi:= (i_E\circ 0_\xi)^*(t_\xi) $$

As the classical Euler class for oriented bundles, the canonical Euler class is multiplicative with respect to direct sums. More precisely
\begin{re}\label{W} Let $\xi_1$, $\xi_2$ two vector bundles of ranks  $r_1$, $r_2$ on $B$. Then there exists a canonical isomorphim $\oo_{\xi_1\oplus\xi_2}= \oo_{\xi_1}\otimes \oo_{\xi_2}$, and via this isomorphism one has
$$e_{\xi_1\oplus\xi_2}=e_{\xi_1}\cup e_{\xi_2}\ .
$$
\end{re}
{\ }\\
{\bf Example:} Let $\xi$ be an Euclidian   bundle of rank 2 on  $B$, i.e., $\xi$ is endowed with a reduction of the structure group from $\GL(2,\R)$ to $\mathrm{O}(2)$. Let $\pi:P\to B$ be the corresponding principal  $\mathrm{O}(2)$-bundle. Then one has natural isomorphisms
$$\det(\xi)=P\times_{\det}\R\ ,\ \oo(\xi)=P\times_{\det}\Z\ .
$$
The canonical Euler class $e_\xi$ introduced above coincides with the twisted first Chern class $\tilde c_1(\xi)\in H^2(B,P\times_{\det}\Z)$ defined in \cite{Fr}. Note that  an $\mathrm{O}(2)$-bundle can be regarded as a {\it twisted Hermitian line bundle} with $\oo(\xi)=P\times_{\det}\Z$  as twisting sheaf.
\\
\\
The canonical Euler class has the following important functoriality property: 
Let $f:B'\to B$ be a continuous map and $\xi'$ a rank $r$ bundle on $B'$ with projection  $p':E'\to B'$.  Let $F:E'\to E$  be a bundle map over $f$  (which is  fibrewise an isomorphism). Then $F$ defines in an obvious way a sheaf isomorphism $F_*: \oo_{\xi'}\to f^*(\oo_\xi)$, and 
\begin{equation}\label{func}F_*(e_{\xi'})=f^*(e_\xi)\ .
\end{equation}
In particular, suppose that $B=B'$, $\xi=\xi'$ and $F$ is a gauge transformation of $\xi$. Then $F_*$ is a sheaf automorphism of $\oo_\xi$ and     $F_*(e_\xi)=e_\xi$; taking $F=-\id_E$, we get $F_*=(-1)^r\id_{\oo_\xi}$. Hence we obtain the following remark which generalizes a well-known property of the classical Euler class \cite{MS}.

\begin{re}\label{oddrank} If $r$ is odd then $2e_\xi=0$.
\end{re}
If $B$ is paracompact we can use an inner product  to identify $\xi$ with its dual, and this identification is unique up to homotopy. Using the functoriality property we obtain
\begin{re}\label{dual} If $\xi$ is a real vector bundle of rank $r$ on a   paracompact basis $B$  then $e_\xi=e_{\xi^\vee}$   via the obvious identification $\oo_\xi=\oo_{\xi^\vee}$.  
\end{re}

Suppose now that $B$ is a closed, connected   manifold of dimension $d$, and let $\oo_B$ be its orientation sheaf. Its fibres are given by  $\oo_{B,x}=H_d(B,B\setminus\{x\};\Z)$. Applying the Poincaré duality isomorphism to $e_\xi$ one obtains a homology class %
$$D_B(e_\xi)\in H_{d-r}(B,\oo_B\otimes\oo_\xi)\ .$$ 

\subsection{Relatively oriented bundles} 

The bundle $\xi$ is called {\it relatively orientable}   when the tensor product $\oo_B\otimes\oo_\xi$ is isomorphic with the constant sheaf $\underline{\Z}$ over $B$. The choice of an isomorphism $\theta:\oo_B\otimes\oo_\xi\to\underline{\Z}$ then allows us to define a class $\theta_*D_{B}(e_\xi) \in H_{d-r}(B;\Z)$, which has the following important {\it geometric interpretation} in the differentiable case:

Let $s\in\Gamma(B,\xi)$ be a regular section of $\xi$, and denote by $Z(s)$ the (smooth) zero locus of $s$. Orient   $Z(s)\subset B$ using the identification 
$$\det(T_{Z(s)})=\det(\resto{T_B}{Z(s)}\otimes\det(N_{Z(s)})^\vee\ ,
$$
the isomorphism $N_{Z(s)}\simeq\resto{\xi}{Z(s)}$ defined by the intrinsic derivative $Ds$, and the trivialization $\theta$. Let $[Z(s)]_\theta$ be the corresponding fundamental class. Then
\begin{equation}\label{gi}\iota_*([Z(s)]_\theta)=\theta_*D_{B}(e_\xi)\in H_{d-r}(B;\Z)\ ,
\end{equation}
where $\iota:Z(s)\hookrightarrow B$ denotes the inclusion map. In particular, for $r=d$ we get  a well-defined invariant
$$\deg(\xi,\theta):=\nu(\theta_*D_{B}(e_\xi))\ ,
$$
where  $\nu$ is the standard augmentation map $H_0(B;\Z)\to\Z$. In this case the zero locus $Z(s)$ of a regular section  is an oriented 0-dimensional manifold, which can be written as $\sum_{x\in Z(s)} \epsilon_{x,\theta} x$, where
$$ \epsilon_{x,\theta}=\mathrm{sign}_\theta(\det(D_x(s))\ ,
$$
and $\mathrm{sign}_\theta$ denotes the sign computed using the orientation of the line $\det(T_x(B))^\vee\otimes\det(E_x)$ defined by $\theta_x$.

The geometric interpretation of formula (\ref{gi}) becomes   
\begin{equation}\label{gi2}
\deg(\xi,\theta)=\sum_{x\in Z(s)} \epsilon_{x,\theta}\ .
\end{equation}

This formula has an important generalization for sections with finitely many zeros. In order to state this result we introduce the local degree $\deg_x(s,\theta_x)$ of a section at an isolated zero with respect to a relative orientation $\theta_x$ at $x$. Let $\tau:\resto{E}{U}\to U\times\R^d$ a trivialization of $\xi$ on a small neighborhood of $x$. This trivialization and the natural orientation of $\R^d$ defines an orientation of $E_x$, and $\theta_x$ induces an isomorphism $\oo_{E_x}\to\oo_{B,x}$ hence a local orientation of $U$ around $x$.
The section $s$ corresponds to a map $\hat s:U\to\R^d$ with an isolated zero at $x$.  Since $U$ and $\R^d$ are oriented we have a well defined local degree  of $\hat s$ at $x$ \cite{EL}. It is easy to  see that this degree does not depend on the trivialisation $\tau$. We denote this local degree by $\deg_x(s,\theta_x)$.  As in the classical case one can prove:

\begin{lm} \label{ld} Let $\xi$ be a relatively orientable bundle of rank $d$ over a closed $d$-manifold $B$, and let $s\in \Gamma(B,\xi)$ be a section with isolated zeros. Let $\theta$ be a trivialisation of $\oo_B\otimes\oo_\xi$. Then
$$\deg(\xi,\theta)=\sum_{x\in Z(s)} \deg_{x}(s,\theta_x)\ .
$$
\end{lm}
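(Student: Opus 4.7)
The plan is to reduce to the regular-section formula (\ref{gi2}) via a local perturbation argument, in direct analogy with the classical Poincar\'e--Hopf reduction. Since $Z(s)$ is finite, I would first choose pairwise disjoint open coordinate neighborhoods $U_x\ni x$ on which $\xi$ is trivial and which are small enough that $s$ has no zero on $\overline{U_x}\setminus\{x\}$. Fixing trivialisations $\tau_x:\resto{E}{U_x}\to U_x\times\R^d$ presents $s$ on $U_x$ as a smooth map $\hat s_x:U_x\to\R^d$ with an isolated zero at $x$; by the very definition of $\deg_x(s,\theta_x)$ recalled just above the statement, this local degree is the classical local degree of $\hat s_x$ at $x$, taken with respect to the orientation of $U_x$ induced by $\tau_x$ and $\theta_x$, and the standard orientation of $\R^d$.

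Next, I would perturb $s$ locally to a regular section. Choose smaller open balls $V_x$ with $\overline{V_x}\subset U_x$ and bump functions $\beta_x$ supported in $U_x$ and equal to $1$ on $V_x$. Since $\hat s_x$ is bounded away from $0$ on the compact set $\overline{U_x}\setminus V_x$, Sard's theorem supplies, for any sufficiently small $\varepsilon>0$, a regular value $c_x\in\R^d$ of $\hat s_x$ with $|c_x|<\varepsilon$. Replacing $\hat s_x$ by $\hat s_x':=\hat s_x-\beta_x c_x$ and transporting this modification back through $\tau_x^{-1}$ produces a section $s'\in\Gamma(B,\xi)$ that agrees with $s$ outside $\bigcup_x U_x$, has no zeros on $\bigcup_x(U_x\setminus V_x)$, and is transverse to the zero section on each $V_x$. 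Thus $s'$ is a regular global section of $\xi$ with $Z(s')\subset\bigcup_x V_x$.

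I would then apply formula (\ref{gi2}) to $s'$ and partition the resulting sum according to which $V_x$ contains each zero:
$$
\deg(\xi,\theta)=\sum_{y\in Z(s')}\epsilon_{y,\theta}=\sum_{x\in Z(s)}\sum_{y\in Z(s')\cap V_x}\epsilon_{y,\theta}\ .
$$
The inner sum equals the algebraic count, in $V_x$, of the transverse zeros of $\hat s_x-c_x$, and the classical local-degree identity (see \cite{EL}) asserts that this algebraic count equals the local degree of $\hat s_x$ at $x$, namely $\deg_x(s,\theta_x)$. Summing over $x\in Z(s)$ completes the proof. The main obstacle I anticipate is the sign bookkeeping identifying $\epsilon_{y,\theta}$ (defined intrinsically via the line $\det(T_y(B))^\vee\otimes\det(E_y)$ trivialised by $\theta_y$) with $\mathrm{sign}(\det D_y\hat s_x')$ computed in the chart; but this is exactly the compatibility that already underlies the well-definedness of $\deg_x(s,\theta_x)$ and its independence of $\tau_x$, asserted in the paragraph preceding the statement, so no additional verification is required.
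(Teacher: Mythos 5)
Your proposal is correct, and it is exactly the argument the paper has in mind: the paper offers no proof of Lemma \ref{ld} beyond the remark ``as in the classical case one can prove'', and your local perturbation of $s$ to a regular section $s'$, followed by formula (\ref{gi2}) and the classical identity ``local degree $=$ signed count of regular preimages of a nearby regular value'', is the standard way to supply that classical argument. One small point: the final sign bookkeeping is not literally the independence-of-$\tau_x$ statement at the point $x$ (which is pointwise), but rather the observation that the local orientation of $U_x$ induced by $\theta$ and $\tau_x$ is locally constant, hence constant on the connected ball $U_x$, so the signs $\epsilon_{y,\theta}$ at the nearby zeros $y\in V_x$ and the degree $\deg_x(s,\theta_x)$ are computed with the same orientation; this is a one-line check, but it is the check actually needed.
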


 The absolute value $|\deg(\xi,\theta)|$ is well defined (independent of the choice of $\theta$) and will be denoted by $|\deg|(\xi)$. Therefore we have   for every relatively orientable bundle $\xi$  an invariant $|\deg|(\xi)\in\N$, which we call {\it the absolute degree of} $\xi$.
\\ \\
{\bf Example:} The tangent bundle $T_B$ is obviously {\it canonically relatively oriented}, because the tensor product $\oo_B^{\otimes 2}$ comes with a canonical trivialisation $\theta_{\rm can}$. Therefore, for any closed, connected    manifold $B$, one obtains a well defined integer invariant   
$$\deg(T_B):=\nu(\theta_{{\rm can} *} D_B (e_{T_B}))\ .
$$

This invariant can easily be identified with the Euler characteristic of $B$. This  is well known for orientable manifolds; in the non-orientable case it  can be seen using the orientation double cover of $B$, and  using the functoriality of the canonical Euler case to reduce the problem to the oriented case.

\subsection{Sections in sphere bundles}
 
 We denote by $E^+_B$ the space  over $B$ obtained by compactifying the fibres $E_x$ of $\xi$ with   points $\infty_x$. Let $\xi^+$  be the corresponding sphere bundle with projection $p^+:E^+_B\to B$. One has a natural Thom class $t^+(\xi)\in H^r(E^+_B,E^+_B\setminus B;(p^+)^*(\oo_\xi))$ whose restriction to any pair $(E_x,E_x\setminus \{0_{E_x}\})$ is the canonical generator of the cyclic group $H^r(E_x,E_x\setminus \{0_{E_x}\};\oo_{\xi,x})$.   Let $\sigma\in\pi_0(\Gamma(B,\xi^+))$ be a homotopy class of sections in $\xi^+$.  We define
$$e_{\xi,\sigma}:=s^*(t^+_\xi)\ \hbox{for }s\in \sigma\ ,
$$
where on the right $s$ is regarded as a map $(B,\emptyset)\to (E^+_B,\emptyset)\to (E^+_B,E^+_B\setminus B)$.  Note that in general the class $e_{\xi,\sigma}$ depends  on $\sigma$ in an essential way;   it is not an intrinsic  invariant of the bundle $\xi$ (see the example below). Suppose now that $B$ is a differentiable $d$-manifold and $\xi$ is a differentible vector bundle. When $\xi$ is relatively orientable,    and $s\in \Gamma(B,\xi^+)$  is  a regular section, we obtain again a similar geometric interpretation of the Euler class $e_{\xi,\sigma}$:
$$\iota_*([Z(s)]_\theta)=\theta_*D_{B}(e_{\xi,[s]})\in H_{d-r}(B;\Z) 
$$
In the case   $r=d$ we   put
$$
\deg(\xi,\theta,\sigma):=\nu(\theta_*D_{B}(e_{\xi,\sigma}))\ .
$$
The degree of a section in relatively oriented sphere bundles plays an important role in real gauge theory \cite{OT2}.
\\ \\
{\bf Example:} Let $\xi=B\times \R^r$ be the trivial bundle over $B$ with fibre $\R^r$ and $s\in\Gamma(B,\xi^+)={\cal C}^\infty(B,\mathrm{S}^r)$. In this case one obtains 
$$e_{\xi,\sigma}=s^*([\mathrm{S}^r]')\ ,
$$
where on the right $[\mathrm{S}^r]'$ denotes the fundamental class of $\mathrm{S}^r$ in cohomology. When $B$ is oriented and $r=d$ we have a canonical isomorphism $\theta_{\mathrm{can}}:\oo_B\otimes\oo_\xi\to\underline{\Z}$ and then
$$\deg(\xi,\theta_{\mathrm{can}},[s])=\deg(s)\ ,
$$
where $\deg(s)$ is defined by the equality $s^*([S^r]')=\deg(s)[B]'$. Hence $\deg(\xi,\theta_{\mathrm{can}},[s])$ coincides with the degree of the map $s:B\to S^r$. 

 In particular, as mentioned above, the Euler class $e_{\xi,\sigma}$  depends  on the homotopy class $\sigma$.

\section{Lower bounds  for the number of zeros of a section}

 Many enumerative problems in real algebraic geometry reduce to  counting the number of zeros of  a section $s\in\Gamma(B,\xi)$ in a  real algebraic bundle $\xi$  with   $\rk(\xi)=\dim(B)$. The necessary formalism can be developed more generally in the differentiable category, so from now we suppose that $B$ is a differentiable manifold and $\xi$ is a differentiable vector bundle on $B$. Let $s\in\Gamma(B,\xi)$ be  a section which is regular, so that   the zero locus $Z(s)$ of $s$ is a smooth 0-dimensional submanifold of $B$.  If $\xi$ is relatively orientable  we can choose a trivialisation $\theta$ of $\oo_B\otimes\oo_\xi$.  Then we have
$$\deg(\xi,\theta)=\sum_{x\in Z(s)} \epsilon_{x,\theta}\ ,
$$
with $\epsilon_{x,\theta}\in\{\pm 1\}$. Taking absolute values we obtain the following simple, but important
\begin{pr}\label{p1} Let $B$ be a   closed, connected differentiable $d$-manifold, $\xi$ a relatively orientable bundle of rank $r=d$  on $B$, and let $s\in\Gamma(B,\xi)$ be  a regular section. Then
\begin{equation}\label{lb}\#Z(s)\geq|\deg|(\xi)\ .
\end{equation}
\end{pr}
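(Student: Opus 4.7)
The proposition follows almost immediately from the geometric interpretation of $\deg(\xi,\theta)$ already established as formula~(\ref{gi2}). The plan is to combine that identity with the triangle inequality, after first checking that $Z(s)$ is finite so that the sum makes sense.

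First I would observe that since $s$ is regular and $\mathrm{rk}(\xi) = d = \dim B$, the zero locus $Z(s)$ is a smooth submanifold of $B$ of dimension $d - r = 0$; as $B$ is compact, $Z(s)$ is therefore a finite set. Next, using the relative orientability of $\xi$, I would pick any trivialisation $\theta: \oo_B \otimes \oo_\xi \to \underline{\Z}$, so that each zero $x \in Z(s)$ carries an intrinsic sign $\epsilon_{x,\theta} \in \{\pm 1\}$ given by the sign of $\det(D_x(s))$ computed with respect to the orientation of $\det(T_x B)^\vee \otimes \det(E_x)$ induced by $\theta_x$.

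By formula~(\ref{gi2}) we then have
$$\deg(\xi,\theta) = \sum_{x \in Z(s)} \epsilon_{x,\theta}.$$
Taking absolute values and applying the triangle inequality gives
$$|\deg(\xi,\theta)| \;\leq\; \sum_{x \in Z(s)} |\epsilon_{x,\theta}| \;=\; \# Z(s).$$
Finally, by the definition of $|\deg|(\xi)$ as the common value $|\deg(\xi,\theta)|$ (which is independent of the choice of trivialisation $\theta$, as noted just after Lemma~\ref{ld}), the left hand side equals $|\deg|(\xi)$, and the desired inequality $\#Z(s) \geq |\deg|(\xi)$ follows.

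There is no serious obstacle here: all the substantive content has been absorbed into the earlier geometric interpretation of $\deg(\xi,\theta)$ and into the well-definedness of $|\deg|(\xi)$. The only point that requires a line of comment is the finiteness of $Z(s)$, which is where the hypotheses $B$ closed and $r = d$ together with regularity of $s$ are used; everything else is the one-line application of the triangle inequality.
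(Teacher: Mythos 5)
Your argument is correct and is essentially the paper's own: the result is obtained by combining the identity $\deg(\xi,\theta)=\sum_{x\in Z(s)}\epsilon_{x,\theta}$ from formula~(\ref{gi2}) with the triangle inequality and the $\theta$-independence of $|\deg(\xi,\theta)|$. Your extra remark on the finiteness of $Z(s)$ (compactness of $B$ plus $0$-dimensionality of the regular zero locus) is a point the paper leaves implicit, but the route is the same.
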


We now come  to the more general situation where $Z(s)$ is finite, but $s$ is not necessary transversal to the 0-section. In order  to generalize the estimate (\ref{lb}) we need a good notion of multiplicity of an isolated zero of $s$.

Denote by ${\cal C}^\infty_x(B,\R)$ the $\R$-algebra of germs at $x$ of differentiable functions defined around $x$, and by $\Gamma_x(B,\xi)$ the space of germs at $x$ of smooth sections of $\xi$ defined around $x$. A section $s$ defines an ideal $(s_x)\subset {\cal C}^\infty_x(B,\R)$ given by
$$(s_x):=\{\langle \lambda,s_x\rangle\ ,\ \lambda\in \Gamma_x(B,\xi^\vee)\}\ .
$$

When $\xi$ is trivial   $s$ can be regarded as an $\R^d$-valued smooth function vanishing at $x$, and then  $(s_x)$ is just the ideal generated by the germs at $x$ of the $n$ components  of $s$. Following  \cite{EL}  we define the $\R$-algebra
$$Q(s_x):=\qmod{{\cal C}^\infty_x(B,\R)}{(s_x)} \ .
$$
The dimension  
$$\mult_x(s):=\dim_\R(Q(s_x))\in\N^*\cup\{\infty\} 
$$
of this quotient is called the multiplicity of $s$ at $x$ (see \cite{EL} p. 22). The multiplicity is an important invariant of the germ which has been studied extensively in the literature. The germ $s_x$ is called finite if  $\mult_x(s)$ is finite. In this case  $x$ is an isolated zero  of $s$, and  the Taylor expansion with respect to a chart $h:(U,x)\to (V,0)$ around $x$ induces an  isomorphism
$$Q(s_x)\  {\simeq}\ \qmod{\R[[X_1,\dots,X_d]]  }{\langle \sg_{1,0},\dots,\sg_{d,0}\rangle }\ .
$$ 
Here   $\sg_{i,0}$ are the Taylor series of the components of $s_x$ with respect to  $h$ and a trivialization of  $\xi$ around $x$. If $B$, $\xi$ and $s$ are real analytic, then   the algebra $Q(s_x)$ can also be obtained using the $\R$-algebra of {\it convergent} power series (see \cite{EL} p. 31), i.e.,
$$Q(s_x)\ \simeq\ \qmod{\R\{ X_1,\dots,X_d\}  }{\langle \sg_{1,0},\dots,\sg_{d,0}\rangle }\ .
$$
 Using this remark, we obtain the following important complex geometric interpretation of the multiplicity of a real analytic germ: 
\begin{re} \label{multC} Suppose $B$, $\xi$ and $s$ are real analytic and $s_x$ is finite. Let  $s_{i}$ be the components of $s_x$ with respect to an analytic chart $h:(U,x)\to (V,0)$ and an analytic  trivialization of $\xi$  around $x$, and let $\tilde s_{i}$ be holomorphic  extensions of these components. Then the     complex space $Z(\tilde s_{1},\dots \tilde s_{d})$ is 0-dimensional at 0, and
$$\mult_x(s)=\dim_\C({\cal O}_{Z(\tilde s_{1},\dots \tilde s_{d}),0})=\mathrm{length}(Z(\tilde s_{1},\dots \tilde s_{d}),0)\ .
$$
\end{re}
The main result of \cite{EL} shows   
\begin{pr} Let $B$ be a differentiable $d$-manifold, $\xi$ a vector bundle of rank $d$, and  let $s$  be a section of $\xi$. Suppose  $x$ is an isolated zero of $s$ and $s_x$ is finite.  Then
$$\mathrm{mult}_{x}(s)\geq| \deg_{x}(s_x)|\ .$$
\end{pr}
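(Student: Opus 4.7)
The plan is to localize the problem and then invoke the Eisenbud--Levine--Khimshiashvili signature formula from \cite{EL}, for which the claimed inequality becomes essentially a triviality.

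First I would reduce to a local model. Pick a chart $h:(U,x)\to(V,0)\subset\R^d$ and a trivialization of $\resto{\xi}{U}$; the section $s$ is thereby represented by a smooth germ $\hat s:(\R^d,0)\to(\R^d,0)$ with an isolated zero at $0$. Both sides of the asserted inequality are intrinsic to the germ: the multiplicity $\mathrm{mult}_x(s)=\dim_\R Q(s_x)$ agrees with $\dim_\R\bigl(\R[[X_1,\dots,X_d]]/\langle\hat s_1,\dots,\hat s_d\rangle\bigr)$ by the Taylor expansion isomorphism recalled just before the statement, and the local degree $\deg_x(s,\theta_x)=\deg_0(\hat s)$ by definition. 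Write $n:=\mathrm{mult}_x(s)$ and denote the local algebra by $Q:=Q(\hat s_0)$, a finite-dimensional real commutative algebra.

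Second, I would apply the main result of \cite{EL}. That theorem produces a non-degenerate symmetric $\R$-bilinear form on $Q$ as follows: one checks that the Jacobian class $J:=[\det(\partial\hat s_i/\partial X_j)]\in Q$ generates the socle, chooses any $\R$-linear functional $\varphi:Q\to\R$ with $\varphi(J)>0$, and sets
$$B_\varphi(a,b):=\varphi(a\cdot b).$$
The Eisenbud--Levine--Khimshiashvili theorem asserts that $B_\varphi$ is non-degenerate and
$$\deg_0(\hat s)=\mathrm{sign}(B_\varphi),$$
independently of the choice of $\varphi$. I would simply quote this.

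Third, the conclusion is the elementary remark that for any symmetric bilinear form $B$ on an $n$-dimensional real vector space the signature satisfies $|\mathrm{sign}(B)|\leq n$, with equality iff $B$ is definite. Applied to $B_\varphi$ on $Q$ this yields
$$|\deg_x(s_x)|=|\mathrm{sign}(B_\varphi)|\leq\dim_\R Q=\mathrm{mult}_x(s),$$
which is the desired inequality.

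The only non-trivial step is the signature formula, i.e.\ the content of \cite{EL}; once this is available, the proof reduces to the tautological bound on the signature of a symmetric bilinear form. The main subtlety one must verify is that the algebraic invariants computed from the Taylor series at $x$ really coincide with the topological degree of $\hat s$ as a map of oriented open subsets of $\R^d$, so that absolute values on both sides are well-defined and independent of all auxiliary choices (chart, trivialization, functional $\varphi$); this is exactly what \cite{EL} establishes.
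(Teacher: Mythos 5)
Your proposal is correct and follows the same route as the paper, which simply quotes the main result of \cite{EL} at this point: the inequality is exactly the Eisenbud--Levine signature formula $\deg_0(\hat s)=\mathrm{sign}(B_\varphi)$ combined with the trivial bound $|\mathrm{sign}(B_\varphi)|\leq\dim_\R Q(s_x)=\mathrm{mult}_x(s)$, after the chart/trivialization reduction that the paper has already set up in its definition of the local degree. Your write-up just makes explicit the deduction that the paper leaves to the citation, so there is nothing to correct.
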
  
 
 Combining this proposition with Lemma \ref{ld} we get the following estimate 
$$|\deg|(\xi)=|\deg(\xi,\theta)|\leq\sum_{x\in Z(s)}| \deg_x(s,\theta_x)|\leq
\sum_{x\in Z(s)}\mathrm{mult}_{x}(s) \ .
$$

This proves
\begin{pr} \label{p2} Let $B$ be a closed, connected differentiable $d$-manifold, $\xi$ a relatively orientable vector bundle of rank $d$ on $B$, and  $s\in \Gamma(B,s)$   a smooth section with isolated zeros. Then
\begin{equation}\label{lban}
\sum_{x\in Z(s)} \mathrm{mult}_{x}(s)\geq|\deg| (\xi)\ .
\end{equation}
\end{pr}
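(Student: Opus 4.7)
The plan is to combine the two preceding results — Lemma \ref{ld} (which equates $\deg(\xi,\theta)$ with the algebraic sum of local degrees at the zeros of $s$) with the Eisenbud–Levine type inequality stated in the proposition immediately above (which bounds the absolute value of a local degree by the multiplicity). Since $\xi$ is relatively orientable, I first fix a trivialisation $\theta:\oo_B\otimes\oo_\xi\to\underline{\Z}$; by definition of the absolute degree, $|\deg|(\xi)=|\deg(\xi,\theta)|$, so the statement to prove is independent of this choice.

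Next, I would deal with a small dichotomy on the right-hand side of (\ref{lban}). If some isolated zero $x\in Z(s)$ has $s_x$ not finite, then $\mathrm{mult}_x(s)=\infty$ by definition, and the sum $\sum_{x\in Z(s)}\mathrm{mult}_x(s)$ is infinite, so the inequality is automatic. Hence we may assume every germ $s_x$ at a zero is finite. Moreover, the hypothesis that the zeros of $s$ are isolated, combined with compactness of $B$, ensures that $Z(s)$ is a finite set, so all the sums that follow are finite sums.

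Now I would apply Lemma \ref{ld} to the section $s$ with its isolated zeros and the chosen $\theta$: this yields the identity
\begin{equation*}
\deg(\xi,\theta)=\sum_{x\in Z(s)}\deg_x(s,\theta_x).
\end{equation*}
Taking absolute values and using the triangle inequality gives $|\deg(\xi,\theta)|\leq\sum_{x\in Z(s)}|\deg_x(s,\theta_x)|$. Finally, for each $x\in Z(s)$ the preceding proposition (the Eisenbud–Levine inequality from \cite{EL}) applies to the finite germ $s_x$ and yields $|\deg_x(s,\theta_x)|\leq\mathrm{mult}_x(s)$. Summing over $x\in Z(s)$ and using $|\deg|(\xi)=|\deg(\xi,\theta)|$ produces the desired estimate (\ref{lban}).

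The proof is essentially a bookkeeping argument stringing together results already established: the genuine analytic content (the inequality between local degree and algebraic multiplicity of a finite $\mathcal{C}^\infty$ germ) is imported from \cite{EL}, and the global content (the realisation of $\deg(\xi,\theta)$ as the sum of local degrees for a section with merely isolated zeros) is Lemma \ref{ld}. The only point that requires a bit of care is the finiteness dichotomy above, ensuring the argument still goes through — vacuously — when some $s_x$ fails to be finite; no transversality hypothesis on $s$ is used anywhere.
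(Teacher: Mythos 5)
Your proposal is correct and follows essentially the same route as the paper: the paper's proof is exactly the chain $|\deg|(\xi)=|\deg(\xi,\theta)|\leq\sum_{x\in Z(s)}|\deg_x(s,\theta_x)|\leq\sum_{x\in Z(s)}\mathrm{mult}_x(s)$ obtained by combining Lemma \ref{ld} with the Eisenbud--Levine inequality. Your extra remark on the infinite-multiplicity case is a harmless (and reasonable) refinement that the paper leaves implicit.
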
 

\subsection{Comparison results: real degree versus complex degree}

Let $(X,\tau)$ be a topological space endowed with an involution $\tau$. We recall that a Real vector bundle on $X$ is a complex vector bundle $E$ endowed with an anti-linear bundle   automorphism $\tilde\tau:E\to E$ which lifts $\tau$ and is also an involution. The involution $\tilde\tau$ is called Real structure on $E$. If $(E,\tilde\tau)$ is a Real vector bundle on    $(X,\tau)$  then the fixed point locus $E(\R):=E^{\tilde\tau}$ of $\tilde\tau$  is a real bundle (in the standard sense) on the fixed point locus $X(\R):=X^\tau$ of $\tau$.

If $X$ is a complex manifold, then a real structure on $X$ is an anti-holomorphic involution $\tau:X\to X$,  a Real holomorphic bundle on $X$ is a holomorphic bundle endowed with an anti-holomorphic Real  structure, and a Real section of a Real holomorphic bundle $(E,\tilde\tau)$ is a $\tilde\tau$-invariant holomorphic section $s$ of $E$.  Such a section induces a real analytic section $s(\R)$ of the bundle $E(\R)\to X(\R)$.

With this definitions we can state  the following comparison result relating  the degree of $E(\R)$ to the degree of $E$, and the multiplicites of the zeros of  a Real holomorphic section of a Real holomorphic vector bundle.

\begin{pr} Let $X$ be compact complex manifold endowed with real structure, and let $E\to X$ be a Real holomorphic vector bundle over $X$ with $\rk(E)=\dim(X)=n$. Suppose that the real vector bundle $E(\R)\to X(\R)$ is relatively orientable, and let $s$ be a Real holomorphic section of $E$ with finite zero locus $Z(s)$. Then
\begin{enumerate}
\item  $|\deg|(E(\R))\leq \sum_{z\in Z(s)\cap X(\R)}\mult_z(s)\leq\langle c_n(E),[X]\rangle\ ,$ 
\item  $|\deg|(E(\R))\equiv \langle c_n(E),[X]\rangle\   \hbox{ (mod 2)}\ ,$ 
\item $|\deg|(E(\R))\equiv \sum_{z\in Z(s)\cap X(\R)}\mult_z(s)\  \hbox{ (mod 2)}\ .
$
\end{enumerate}
\end{pr}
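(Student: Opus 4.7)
The plan is to deduce all three parts from Proposition \ref{p2}, Lemma \ref{ld}, the complex interpretation of multiplicity in Remark \ref{multC}, and the Eisenbud--Levine signature interpretation of the local degree.

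First, $s(\R)$ is a real analytic section of the relatively orientable bundle $E(\R)\to X(\R)$, whose zero set is $Z(s)\cap X(\R)$. At a real zero $z$ I would choose a Real holomorphic chart around $z$ and a Real holomorphic trivialization of $E$ on a neighborhood; the components $s_i$ of $s$ then have real Taylor coefficients, and their restrictions to $X(\R)$ are the components of $s(\R)$. Remark \ref{multC} then identifies $\mult_z(s(\R))$ with $\mathrm{length}_z(Z(s))$, which is the meaning of $\mult_z(s)$ appearing in the statement. Applying Proposition \ref{p2} to $s(\R)$ gives the first inequality
\[|\deg|(E(\R)) \leq \sum_{z\in Z(s)\cap X(\R)} \mult_z(s),\]
i.e.\ the left half of (1).

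Next, since $s$ is a holomorphic section of a rank $n$ bundle on a compact complex $n$-manifold with finite zero scheme, the total length of $Z(s)$ equals $\langle c_n(E),[X]\rangle$ (the zero cycle represents the top Chern class). The Real structure $\tilde\tau$ permutes $Z(s)$, preserves lengths, and has fixed locus exactly $Z(s)\cap X(\R)$; hence
\[\langle c_n(E),[X]\rangle = \sum_{z\in Z(s)\cap X(\R)} \mult_z(s) \;+\; 2\!\!\sum_{\{z,\tilde\tau(z)\}} \mult_z(s),\]
the second sum running over conjugate pairs of non-real zeros. This yields the second inequality of (1) and simultaneously the congruence $\langle c_n(E),[X]\rangle \equiv \sum_{z\in Z(s)\cap X(\R)} \mult_z(s) \pmod 2$.

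Finally, for (3) I would invoke the Eisenbud--Levine theorem \cite{EL}: at a finite real zero $x$, the local degree $\deg_x(s(\R),\theta_x)$ equals the signature of a non-degenerate symmetric bilinear form (the residue pairing) on $Q(s(\R)_x)$. Since signature and dimension of a non-degenerate symmetric real form have the same parity, one has $\deg_x(s(\R),\theta_x)\equiv \mult_x(s)\pmod 2$. Summing over $Z(s)\cap X(\R)$ via Lemma \ref{ld}, and using $|a|\equiv a\pmod 2$, gives
\[|\deg|(E(\R)) \equiv \sum_{z\in Z(s)\cap X(\R)} \mult_z(s) \pmod 2,\]
which is (3). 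Combining with the congruence from the previous paragraph yields (2).

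The main obstacle is the parity step (3): one has to remember that the Eisenbud--Levine index is not merely bounded by, but has the same parity as, the multiplicity, via the signature-versus-dimension comparison for the residue form. The remaining work is bookkeeping: the $\tilde\tau$-action on $Z(s)$, the identification of the top Chern number with the total length of the zero scheme, and the passage from real multiplicity to complex length via Remark \ref{multC}.
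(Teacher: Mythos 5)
Your argument is correct, and parts of it coincide with the paper's proof: the left inequality in (1) via Proposition \ref{p2} applied to $s(\R)$ together with Remark \ref{multC}, the identity $\langle c_n(E),[X]\rangle=\sum_{z\in Z(s)}\mult_z(s)$, and the observation that the non-real zeros come in $\tilde\tau$-conjugate pairs of equal multiplicity, so their total contribution is even. Where you genuinely diverge is in the logical order and the key input for the two congruences. You prove (3) first, locally and algebraically, by invoking the full Eisenbud--Levine theorem: the local degree is the signature of a nondegenerate residue form on $Q(s(\R)_z)$, hence has the same parity as $\dim_\R Q(s(\R)_z)=\mult_z(s)$; summing with Lemma \ref{ld} gives (3), and (2) then follows from (3) together with the evenness of the non-real contribution. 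The paper goes the other way: it proves (2) by a purely topological computation, $\langle c_n(E),[X]\rangle\equiv\langle w_{2n}(E),[X]_2\rangle=\langle w_n(E(\R)),[X(\R)]_2\rangle\equiv\deg(E(\R)) \pmod 2$, where the middle equality is Theorem 6.4 of \cite{OT1}, and then deduces (3) from (2) and the same evenness argument. Your route has the merit of being self-contained (no appeal to \cite{OT1}) and of exploiting the signature refinement of Eisenbud--Levine, which elsewhere in the paper is only used as an inequality; the paper's route buys a version of (2) that is intrinsically topological, making no use of the particular section beyond the hypotheses, and is shorter once the cited Stiefel--Whitney comparison is available. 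Both arguments rest equally on the implicit identification, via Remark \ref{multC}, of the smooth real multiplicity at a real zero with the complex local length, so you are on the same footing as the paper there.
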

\begin{proof} (1) Applying Proposition \ref{p2} to the section $s(\R)\in\Gamma(X(\R)),E(\R))$, and using Remark \ref{multC} we obtain 
$$\langle c_n(E),[X]\rangle=\sum_{z\in Z(s)}\mult_z(s)=\sum_{z\in Z(s)\cap X(\R)}\mult_z(s)+ \sum_{z\in Z(s)\setminus X(\R)}\mult_z(s) \geq $$
$$
\geq |\deg|(E(\R))\ .
$$
\\
(2) We have $$\langle c_n(E),[X]\rangle\hbox{ (mod 2)}=\langle w_{2n}(E),[X]_2\rangle=\langle w_n(E(\R)),[X(\R)]_2\rangle=$$
$$=\deg(E(\R))\hbox{ (mod 2) },
$$ 
where the second equality follows from Theorem 6.4 in \cite{OT1}.
\\
\\
(3)  This follows from (2) and from the formula used in the proof of (1) taking into account that $\sum_{z\in Z(s)\setminus X(\R)}\mult_z(s)\in 2\N$.
\end{proof}
\section{Counting real lines in real hypersurfaces}
 
Let $V$ be an $n$-dimensional real vector space $V$,  $f\in S^k(V^\vee)$ a $k$-linear form on $V$.  The  associated  polynomial function  defines a section $\sigma_f\in H^0(\P(V),{\cal O}_{\P(V)}(k))$, and the zero locus 
$Z(\sigma_f)\subset\P(V)$
 of this section is a real  hypersurface of degree $k$. We are interested in the number of projective lines contained in this hypersurface $Z(\sigma_f)$. Our method is very natural and starts from the following obvious 
\begin{re} A projective line $\P(W)$ defined by 2-dimensional subspace $W\subset V$ is contained in $Z(\sigma_f)$ if and only if the restriction of   $f$  to $W$ vanishes.
\end{re}

Let $U$ be the tautological  bundle on the Grassmannian $G_2(V)$ of 2-planes in $V$. The form $f$ induces a section $s_f$ in $k$-th symmetric power $S^k(U^\vee)$ of $U^\vee$ with $s_f(W):=\resto{f}{W^k}$.   Hence the projective lines in $Z(\sigma_f)$ correspond bijectively to the zeros of the section $s_f\in \Gamma(G_2(V),
S^k(U^\vee))$. The number of these zeros can be estimated from below using the methods developed in the previous sections provided the following conditions are satisfied
\begin{enumerate}
\item $\rk(S^k(U^\vee)=\dim(G_2(V))$, 
\item $ S^k(U^\vee)$ is relatively orientable.
\end{enumerate}

We have $\rk(S^k(U^\vee)=k+1$ and $\dim(G_2(V))=2(n-2)$, so the first condition is equivalent to   $k=2n-5$.  We will see that, if this holds, then the second condition is always satisfied. More precisely we will see that, when   $k=2n-5$, then $S^k(U^\vee)$ has a {\it canonical}  relative orientation. This has an important consequence:  one can assign to any projective line $l\subset Z(\sigma_f)$,  which is a regular zero of $s_f$, a sign in a completely canonical way.

\subsection{Canonical relative orientations. Elliptic and hyperbolic lines}

We use the notations and constructions introduced above. The problem concerning the relative orientation of the bundle $S^{2n-5}(U^\vee)$ is solved by the following 

\begin{pr} \label{canor} Let $B=G_2(V)$, $\xi=S^{2n-5}(U^\vee)$  and let $\underline{V}$ be the trivial bundle with fibre $V$ on $B$. Then one has a natural identification
$$\oo_B\otimes\oo_\xi=\left[\oo_{U^\vee}^{n^2-4 n+5}\otimes\oo_{\underline{V}}\right]^{\otimes 2} \ ,
$$
so that the sheaf $\oo_B\otimes\oo_\xi$ on $B$  has a canonical trivialization.
\end{pr}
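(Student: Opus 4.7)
The strategy is to express both $\oo_B$ and $\oo_\xi$ as tensor products of $\oo_{U^\vee}$ and $\oo_{V/U}$, then repackage their product as a canonical tensor square by invoking the tautological short exact sequence $0 \to U \to \underline V \to V/U \to 0$.

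First I would use the identification $T_B \cong U^\vee \otimes (V/U)$ (standard for the Grassmannian of $2$-planes) together with the determinantal formula $\det(A\otimes B) = (\det A)^{\otimes \rk B} \otimes (\det B)^{\otimes \rk A}$, which at the level of orientation sheaves gives $\oo_B = \oo_{U^\vee}^{\otimes(n-2)} \otimes \oo_{V/U}^{\otimes 2}$.  For $\oo_\xi$, the rank-$2$ identity $\det S^k(E) = (\det E)^{\otimes k(k+1)/2}$ with $k=2n-5$ yields $\oo_\xi = \oo_{U^\vee}^{\otimes(2n-5)(n-2)}$.  Multiplying, the total exponent of $\oo_{U^\vee}$ is $(n-2)(1+2n-5) = 2(n-2)^2$, so
$$\oo_B \otimes \oo_\xi \;=\; \oo_{U^\vee}^{\otimes 2(n-2)^2} \otimes \oo_{V/U}^{\otimes 2}.$$

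To reach the precise form claimed, I would then use the tautological sequence to obtain the canonical isomorphism $\oo_{\underline V} \simeq \oo_U \otimes \oo_{V/U}$, identify $\oo_U \simeq \oo_{U^\vee}$ canonically (via $\det U^\vee = (\det U)^\vee$ together with the fact that $\oo_L = \oo_{L^\vee}$ for any real line bundle $L$), and square to obtain $\oo_{V/U}^{\otimes 2} \simeq \oo_{\underline V}^{\otimes 2} \otimes \oo_{U^\vee}^{\otimes 2}$.  Substituting and regrouping,
$$\oo_B \otimes \oo_\xi \;\simeq\; \oo_{U^\vee}^{\otimes 2[(n-2)^2+1]} \otimes \oo_{\underline V}^{\otimes 2} \;=\; \bigl[\oo_{U^\vee}^{\otimes(n^2-4n+5)} \otimes \oo_{\underline V}\bigr]^{\otimes 2},$$
where the exponent identity $(n-2)^2+1 = n^2-4n+5$ is the final numerical fact needed.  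The canonical trivialization of $\oo_B \otimes \oo_\xi$ then follows from the general principle that for any orientation line system $\cal L$ one has the canonical trivialization $\cal L^{\otimes 2} \simeq \underline \Z$ via $e \otimes e \mapsto 1$, which is well-defined because $(-e)\otimes(-e) = e \otimes e$.

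The main obstacle is not the algebra but the bookkeeping of canonicity: each identification used above---the Grassmannian tangent-bundle splitting, the two determinantal formulas, the tautological SES, and $\oo_U \simeq \oo_{U^\vee}$---must be verified to be natural (independent of any choice of metric, basis, or sign), so that the exponent arithmetic goes through on the nose rather than merely up to a global sign.  Once these canonicity checks are made, the proof reduces to the elementary exponent manipulation above.
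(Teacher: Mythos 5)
Your proposal is correct and follows essentially the same route as the paper: the identification $T_B\cong U^\vee\otimes(\underline V/U)$, the two determinant lemmas for tensor products and for $S^k$ of a rank-$2$ bundle, and the relation $\det(\underline V)\simeq\det(U)\otimes\det(\underline V/U)$ to trade $\oo_{V/U}$ for $\oo_{\underline V}\otimes\oo_{U^\vee}$, ending with the observation that a tensor square of an orientation system is canonically trivial. The only cosmetic difference is that you invoke the tautological exact sequence where the paper uses the splitting $\underline V=U\oplus U^\bot$ coming from an inner product; both yield the same canonical identification.
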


\begin{proof} By Lemma \ref{dettp} below one has a canonical isomorphisms
$$\det(T_B)=\det\left(\Hom\left(U,\qmod{\underline{V}}{U}\right)\right)
=\det\left(U^\vee\otimes \qmod{\underline{V}}{U}\right)=$$
$$=\det(U^\vee)^{n-2}\otimes(\det(\underline{V}))\otimes\det(U)^\vee)^2
= \det(U^\vee)^{n}\otimes \det(\underline{V})^{\otimes 2}\ .$$
In this formula we use  the isomorphism $\det(\underline{V})=\det(U)\otimes\det(\qmod{\underline{V}}{U})$ given by the decomposition $\underline{V}=U\oplus U^\bot$ with respect to an inner product on $V$ (see Remark \ref{W}).
On the other hand, using Lemma \ref{detsym} below, we get
$$\det(S^k(U^\vee))=\det(U^\vee)^{\otimes \frac{k(k+1)}{2}}=\det(U^\vee)^{\otimes  {(2n-5)(n-2)} }\ ,
$$
which proves $\det(T_B)\otimes \det(\xi)=[\det(U^\vee)^{n^2-4 n+5}\otimes \det(\underline{V})]^{\otimes 2}$.
 
\end{proof}

\begin{lm} \label{dettp} For two vector spaces $W'$, $W''$ of dimensions $d'$, $d''$ one has a canonical identification $\det(W'\otimes W'')=\det(W')^{\otimes d''}\otimes(\det W'')^{\otimes d'}$.
\end{lm}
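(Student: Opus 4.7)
The plan is to define the claimed identification explicitly via chosen bases and then verify that the definition does not depend on those choices. Fix bases $(e_i)_{i=1}^{d'}$ of $W'$ and $(f_j)_{j=1}^{d''}$ of $W''$, and order the induced basis $\{e_i \otimes f_j\}$ of $W' \otimes W''$ lexicographically (first by $j$, then by $i$). Define
$$\Phi_{e,f} \colon \det(W' \otimes W'') \longrightarrow (\det W')^{\otimes d''} \otimes (\det W'')^{\otimes d'}$$
by sending the lexicographically ordered wedge $\bigwedge_{j,i}(e_i \otimes f_j)$ to $(e_1 \wedge \cdots \wedge e_{d'})^{\otimes d''} \otimes (f_1 \wedge \cdots \wedge f_{d''})^{\otimes d'}$. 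This is manifestly an isomorphism of $1$-dimensional spaces.

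The content is to verify that $\Phi_{e,f}$ is independent of the chosen bases. Apply a base change $e_i \mapsto e_i' = A e_i$, $f_j \mapsto f_j' = B f_j$ with $A \in \GL(W')$, $B \in \GL(W'')$. The right-hand side clearly scales by the factor $\det(A)^{d''} \det(B)^{d'}$. For the left-hand side one must compute the determinant of the induced endomorphism $A \otimes B$ acting on $W' \otimes W''$. Using the factorization $A \otimes B = (A \otimes \id_{W''}) \circ (\id_{W'} \otimes B)$ together with the decomposition $W' \otimes W'' = \bigoplus_{j=1}^{d''} W' \otimes \R f_j$, which makes $A \otimes \id_{W''}$ block-diagonal with $d''$ blocks each equal to $A$ (and symmetrically for $\id_{W'} \otimes B$), one obtains $\det(A \otimes B) = \det(A)^{d''} \det(B)^{d'}$. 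Hence both sides transform by the same scalar, so $\Phi_{e',f'} = \Phi_{e,f}$, and the identification is canonical.

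The main subtlety — and the only step that requires care — is the ordering convention on the basis of $W' \otimes W''$: lexicographic ordering must be used consistently on both the original and the base-changed bases, so that the scalar produced on the left is exactly $\det(A \otimes B)$ with no extraneous sign coming from a reordering of the wedge. Once this convention is fixed, the verification is routine, and the resulting $\Phi$ is the natural isomorphism asserted by the lemma.
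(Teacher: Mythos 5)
Your construction is essentially the paper's: you define the isomorphism on the basis $\{e_i\otimes f_j\}$ ordered first by $j$ then by $i$ (exactly the convention used in the paper's proof and in Remark \ref{orhom}), sending the ordered wedge to $(e_1\wedge\cdots\wedge e_{d'})^{\otimes d''}\otimes(f_1\wedge\cdots\wedge f_{d''})^{\otimes d'}$. Your explicit check of basis-independence via $\det(A\otimes B)=\det(A)^{d''}\det(B)^{d'}$ is correct and merely makes precise the well-definedness that the paper leaves implicit, so the proposal is correct and follows the same route.
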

\begin{proof} We use the isomorphism given by
$$(w'_1\otimes w''_1)\wedge \dots \wedge(w'_{d'}\otimes w''_{1})\wedge\dots \wedge (w'_{1}\otimes w''_{d''})\wedge \dots \wedge(w'_{d'}\otimes w''_{d''})\mapsto 
$$
$$\mapsto (w'_1\wedge\dots\wedge  w'_{d'})^{d''}\otimes (w''_1\wedge\dots\wedge  w''_{d''})^{d''} \ .
$$
\end{proof}
\begin{re} \label{orhom} According to the convention used in the proof of the preceding Lemma, the space of matrices $M_{m,n}(\R)=\Hom(\R^n,\R^m)=(\R^n)^\vee\otimes\R^m$ is oriented using the identification $M_{m,n}(\R)\to(\R^n)^m$ which assigns to a matrix the $m$-tuple of its rows. 
\end{re}
\begin{lm}\label{detsym} For a $2$-dimensional vector space $W$ one has a canonical identification $\det(S^k(W))=\det(W)^{\otimes\frac{k(k+1)}{2}    }$.
\end{lm}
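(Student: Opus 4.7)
The plan is to pick a basis $(e_1,e_2)$ of $W$, use it to write down a candidate isomorphism, and then verify that this isomorphism is independent of the basis. A basis $(e_1,e_2)$ determines
\[
\det(W)^{\otimes k(k+1)/2}\ni (e_1\wedge e_2)^{\otimes k(k+1)/2}\longmapsto e_1^k\wedge e_1^{k-1}e_2\wedge\dots\wedge e_1^{k-i}e_2^i\wedge\dots\wedge e_2^k\in\det(S^k(W)),
\]
where on the right we order the monomial basis $(e_1^{k-i}e_2^i)_{0\leq i\leq k}$ of $S^k(W)$ by decreasing power of $e_1$. I would take this assignment as the definition and show that it is $\GL(W)$-equivariant.

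The content is the identity
\[
\det\bigl(S^k(g)\bigr)=(\det g)^{k(k+1)/2}\qquad\forall g\in\GL(W),
\]
which I would check first on the diagonal subgroup: if $g(e_i)=\lambda_i e_i$ ($i=1,2$), then $S^k(g)$ is diagonal in the monomial basis with eigenvalues $\lambda_1^{k-i}\lambda_2^i$ for $0\leq i\leq k$, so
\[
\det\bigl(S^k(g)\bigr)=\prod_{i=0}^k \lambda_1^{k-i}\lambda_2^i = \lambda_1^{k(k+1)/2}\lambda_2^{k(k+1)/2}=(\det g)^{k(k+1)/2}.
\]
Both sides of the identity are polynomial functions in the matrix entries of $g$ with respect to the basis $(e_1,e_2)$, so equality on the Zariski-dense subset of diagonalizable $g$ forces equality on all of $\GL(W)$. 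Given this, if $(e_1',e_2')=(e_1,e_2)\cdot g$ is any other basis, both sides of the displayed map pick up the same factor $(\det g)^{k(k+1)/2}$, so the map is basis-independent, i.e., canonical.

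There is essentially no obstacle: the only substantive step is the computation of $\det S^k(g)$, and the diagonal case plus a density argument handle it cleanly. I would remark that the same argument (with a multinomial bookkeeping of exponents) gives the general formula $\det(S^k(W))=\det(W)^{\otimes\binom{n+k-1}{n}\,k/n}$ for $\dim W=n$, specializing for $n=2$ to the exponent $(k+1)\cdot k/2$ stated in the lemma.
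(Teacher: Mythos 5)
Your argument is correct and is essentially the paper's own proof: the paper simply asserts that the assignment $(w_1^k)\wedge(w_1^{k-1}w_2)\wedge\dots\wedge(w_2^k)\mapsto(w_1\wedge w_2)^{\otimes\frac{k(k+1)}{2}}$ extends to a well-defined isomorphism, and the identity $\det\bigl(S^k(g)\bigr)=(\det g)^{k(k+1)/2}$ that you verify (diagonal case plus density) is precisely the implicit content of that well-definedness. One small slip in your closing aside, which does not affect the lemma: for $\dim W=n$ the exponent should be $\frac{k}{n}\binom{n+k-1}{n-1}=\binom{n+k-1}{n}$, not $\binom{n+k-1}{n}\cdot k/n$.
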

\begin{proof} The assignment 
$$(w_1^k)\wedge(w_1^{k-1}  w_2)\wedge\dots\wedge(w_1  w_2^{k-1}) \wedge(w_2^k)\mapsto (w_1\wedge w_2)^{\otimes \frac{k(k+1)}{2}}\  
$$
for $w_1$, $w_2\in W$, extends to a well defined linear isomorphism.
\end{proof}

\begin{re} Although canonical, the isomorphisms used in Lemma  \ref{dettp}, Lemma  \ref{orhom}, and Lemma \ref{detsym} depend on conventions, which must a priori be fixed and used consistently. For instance, the orientation of $M_{m,2}(\R)$ induced by  the isomorphism $M_{m,2}(\R)\to(\R^m)^2$   differs from the orientation used in the proof of Lemma \ref{orhom} by the factor $(-1)^{\frac{m(m-1)}{2}}$. Similarly, the orientation of $S^k(W)$ induced by the isomorphism 
$$(w_2^k)\wedge(w_1  w_2^{k-1})\wedge\dots\wedge(w_1^{k-1}  w_2 ) \wedge(w_1^k)\mapsto (w_1\wedge w_2)^{\otimes \frac{k(k+1)}{2}}\  
$$
 differs from the orientation used in the proof Lemma \ref{detsym} by the factor $(-1)^{\frac{k(k+1)}{2}}$.
\end{re}

\begin{co} The bundle $\xi=S^{2n-5}(U^\vee)$ on the Grassmannian $B=G_2(V)$ is canonically relatively  oriented, i.e., the sheaf $\oo_B\otimes\oo_\xi$ comes with a canonical trivialization $\theta_\mathrm{can}$, which can be defined  choosing   arbitrary orientations of the vector space $V$ and of the 2-planes  $W\subset V$.
\end{co}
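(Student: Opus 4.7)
The plan is to derive the statement directly from Proposition \ref{canor}. The key observation, valid for any orientation sheaf $\oo$ of a real line bundle (equivalently, any locally constant rank-one $\Z$-sheaf with structure group $\{\pm 1\}$), is that the tensor square $\oo^{\otimes 2}$ carries a canonical global trivialization: for any local generator $e$ of $\oo$, the element $e\otimes e$ is insensitive to the replacement $e\mapsto -e$, so the local sections $e\otimes e$ glue to a global nowhere-vanishing section of $\oo^{\otimes 2}$.

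I would then apply this observation to the sheaf $\Lg:=\oo_{U^\vee}^{n^2-4n+5}\otimes\oo_{\underline{V}}$, which is itself the orientation sheaf of the real line bundle $\det(U^\vee)^{n^2-4n+5}\otimes\det(\underline{V})$, obtaining a canonical trivialization of $\Lg^{\otimes 2}$. Combining this with the identification $\oo_B\otimes\oo_\xi=\Lg^{\otimes 2}$ supplied by Proposition \ref{canor} produces the desired canonical trivialization $\theta_\mathrm{can}$ of $\oo_B\otimes\oo_\xi$.

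For the second, more constructive assertion, one notes that a chosen orientation of $V$ trivializes $\oo_{\underline{V}}$ globally, while a chosen orientation of a 2-plane $W\subset V$ trivializes the fibre $\oo_{U^\vee,W}=\oo_{U,W}$. Together they give a local trivialization $\ell$ of $\Lg$ near $W$, and $\ell\otimes\ell$ agrees with $\theta_\mathrm{can}$ on that neighborhood. Since the squaring annihilates the sign ambiguity of these local choices, $\theta_\mathrm{can}$ is globally defined even when $U$ fails to be orientable (as happens for odd $n$). There is no serious obstacle here---the content of the corollary reduces to a one-line remark about $\Z/2$-torsors, with Proposition \ref{canor} doing all the substantive work; the only point requiring care is the verification that the local trivializations coming from orientation choices do indeed square to the globally well-defined $\theta_\mathrm{can}$, which is precisely what legitimizes speaking of an \emph{intrinsic} canonical relative orientation.
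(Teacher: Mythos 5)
Your proposal is correct and follows the same route the paper intends: the corollary is an immediate consequence of Proposition \ref{canor}, since the square $\Lg^{\otimes 2}$ of the rank-one orientation sheaf $\Lg=\oo_{U^\vee}^{n^2-4n+5}\otimes\oo_{\underline{V}}$ is canonically trivialized by $e\otimes e$, independently of the local sign choices coming from orientations of $V$ and of the planes $W$. The paper leaves exactly this observation implicit (as in its earlier example $\oo_B^{\otimes 2}$ for the tangent bundle), so your write-up simply makes the same argument explicit.
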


\begin{dt}  A projective line  $l=\P(W)\subset Z(\sigma_f)$   which is a regular zero of  the section  $s_f\in\Gamma(G_2(V),S^k(U^\vee))$ will be  called elliptic (hyperbolic) if $\epsilon_{l,\theta_\mathrm{can}}=1$ (respectively -1).
\end{dt}
 
\subsection{The Euler class of the tautological bundle on the Grassmannian}

Let $\tilde G_2(V)$ be  the  Grassmannian of oriented planes of $V$, and $\tilde U$ its tautological rank 2 bundle; note that $\tilde U$  is tautologically oriented. The proof of Proposition \ref{canor} yields a canonical isomorphism
$$\det(T_{\tilde G_2(V)})=\det(\tilde U^\vee)^{n}\otimes \det(\underline{V})^{\otimes 2}\ ,
$$
which shows that $\tilde G_2(V)$ is {\it canonically} oriented.
\begin{pr} \label{powere} $e_{\tilde U^{\oplus(n-2)}}=2[\tilde G_2(V)]'$\ .
\end{pr}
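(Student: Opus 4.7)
The plan is to exhibit a regular section of $(\tilde U^\vee)^{\oplus(n-2)}$ with exactly two zeros, each contributing $+1$ to the Euler number via formula (\ref{gi2}). By Remark \ref{dual} and the tautological orientation of $\tilde U$, it suffices to compute $e_{(\tilde U^\vee)^{\oplus(n-2)}}$, and the natural sections come from restricting linear forms. Pick $n-2$ linear forms $\alpha_1,\ldots,\alpha_{n-2}\in V^\vee$ in general position, let $W_0:=\bigcap_i\ker\alpha_i$ (a $2$-plane for generic choice), and take $s:=(\alpha_i|_U)_i\in\Gamma(\tilde G_2(V),(\tilde U^\vee)^{\oplus(n-2)})$. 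A plane $W$ lies in $Z(s)$ iff $W\subset W_0$, so $Z(s)=\{W_0^+,W_0^-\}$ consists of the two oriented lifts of $W_0$. The intrinsic derivative at either zero is
$$D_W s\colon\Hom(W,V/W)\longrightarrow(W^\vee)^{\oplus(n-2)},\qquad\phi\mapsto(\alpha_i\circ\phi)_i,$$
which is an isomorphism because generically the $\alpha_i$ descend to an isomorphism $V/W_0\to\R^{n-2}$.

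To compute the sign $\epsilon_{W_0^+}$ I work in coordinates with $V=\R^n$, $W_0=\mathrm{span}(e_1,e_2)$ oriented by $e_1\wedge e_2$, and $\alpha_i=e_{i+2}^*$. Using the conventions of Lemma \ref{dettp} and Remark \ref{orhom}, the canonical oriented basis of $T_{W_0^+}\tilde G_2(V)=\Hom(W_0,V/W_0)$ and the oriented basis of $(W_0^\vee)^{\oplus(n-2)}$ can be put in bijection so that $D_{W_0^+}s$ becomes the identity matrix. One then checks that these orderings are compatible with the canonical orientation of
$$\det T_{\tilde G_2(V)}\otimes\det\bigl((\tilde U^\vee)^{\oplus(n-2)}\bigr)=\bigl[\det(\tilde U^\vee)^{n-1}\otimes\det\underline{V}\bigr]^{\otimes 2}$$
inherited from the proof of Proposition \ref{canor}, which yields $\epsilon_{W_0^+}=+1$.

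For $W_0^-$ the linear map $D_W s$ is intrinsically the same, while reversing the orientation of $\tilde U$ multiplies the orientation of $\det T_{\tilde G_2(V)}$ by $(-1)^n$ and of $\det((\tilde U^\vee)^{\oplus(n-2)})$ by $(-1)^{n-2}$; their tensor product (being a square) is therefore preserved, so $\epsilon_{W_0^-}=\epsilon_{W_0^+}=+1$. Adding the contributions gives $\langle e_{\tilde U^{\oplus(n-2)}},[\tilde G_2(V)]\rangle=2$, which is the claim. The main obstacle is the orientation bookkeeping: pinning down the $+$ rather than the $-$ sign requires a careful matching of the basis orderings coming from Lemma \ref{dettp} and Remark \ref{orhom} with the canonical orientation from Proposition \ref{canor}, using the conventions fixed there consistently.
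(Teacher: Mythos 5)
Your proposal is correct and is essentially the paper's own argument: the paper works with the metric-dual section $\eta_i(W)=\p_W(w'_i)$ of $\tilde U^{\oplus(n-2)}$ for a fixed basis $(w'_i)$ of $W_0^\bot$, finds the same two zeros (the two oriented lifts of a single plane), and identifies the intrinsic derivative with the map $M_{n-2,2}(\R)\to(\R^{2})^{\oplus(n-2)}$ sending a matrix to its rows, which is orientation preserving by the convention of Remark \ref{orhom}. Your explicit $(-1)^{n}\cdot(-1)^{n-2}$ symmetry argument giving the same sign at the second zero is the only step the paper leaves implicit.
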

\begin{proof} We endow $V$ with an inner product.  For an oriented plane $W\in \tilde G_2(V)$   we denote by $\bar W$ the same plane endowed with the opposite orientation.

Now fix an orientation of $V$ and  endow the orthogonal complement $W^\bot$ with the  orientation  which makes the isomorphism $W\oplus W^\bot\to V$ orientation preserving.

Let $(w_1,w_2)$, $(w'_1,\dots,w'_{n-2})$ be    bases of $W$, $W^\bot$ compatible with the orientations. We obtain $n-2$ sections $\eta_i$ in $\tilde U$ given by
$$\eta_i(W)=\p_W(w'_i)\ .
$$
The corresponding section $\eta=(\eta_1,\dots,\eta_{n-2})$ of the direct sum $\tilde U^{\oplus (n-2)}$ has exactly two zeros: $W$ and $\bar W$.
We have to compute the signs of the determinant of the intrinsic derivatives $D_W\eta$, $D_{\bar W} \eta$ at these points.

The direct sum decomposition $V=W\oplus W^\bot$ defines a chart 
$$h_W:G_W\map \Hom(W,W^\bot)$$
 of $\tilde G_2(V)$ around $W$ and a trivialization $\tau_W:\resto{U}{G_W}\to G_W\times W$ of $\tilde U$ over $G_W$.  The inverse $h_W^{-1}$ is given by $h_W^{-1}(\phi)=\mathrm{graph}(\phi)$, and the restriction of $\tau_W$ to a fibre $U_{W_1}=W_1$ of $U$ is $\resto{\tau_W}{U_{W_1}}:=\resto{\p_W}{W_1}$.  Therefore, via the chart $h_W$ and the trivialization $\tau_W$, the section $\eta$ corresponds to the map $\hat\eta:\Hom(W,W^\bot)\to W^{\oplus (n-2)}$ given by
$$\hat\eta(\phi)=(\p_W\p_{\mathrm{graph}(\phi)}(w'_1), \dots,\p_W\p_{\mathrm{graph}(\phi)}(w'_{n-2}))\ .
$$

On the other hand, one has the formula (see \cite{DK} p. 8)
$$\p_W\p_{\mathrm{graph}(\phi)} (v)=(1+\phi^*\phi)^{-1}(p_W(v)+\phi^*(p_{W'}(v)))\ ,
$$
which shows that
$$D_W\eta(\phi)=(\phi^*(w'_1),\dots, \phi^*(w'_{n-2}))\ .
$$
Using the bases $(w_1,w_2)$ and $(w_1',\dots w_{n-2}')$ to identify $\Hom(W,W^\bot)$ with the space of matrices $M_{n-2,2}(\R)$ we see that $D_W\eta$ is just the isomorphism $M_{n-2,2}(\R)\to [\R^{2}]^{\oplus (n-2)}$ which maps a matrix  to  the $(n-2)$-tuple of its rows.

\end{proof}

\subsection{The Euler class computation}

The purpose of this section is the computation of the Euler class 
$$e_{S^k(U)}\in H^{2(n-2)}(G_2(V),\oo_{S^k(U)})\ .$$

Note that we have natural isomorphisms 
$$H^{2(n-2)}(G_2(V),\oo_{S^k(U)})\simeq  H^{2(n-2)}(G_2(V),\oo_{G_2(V)})\simeq\Z\ ,$$
where the first isomorphism is given by the canonical trivialisation $\theta_\mathrm{can}$ introduced in the previous section, and the second is induced by Poincaré duality.
The main result is
\begin{thry} \label{esym} Let $V$ be an $n$-dimensional real vector space, and let $U$, $\tilde U$  be the tautological bundles on the Grassmannians $G_2(V)$, $\tilde G_2(V)$. Put $k:=2n-5$. Then
\begin{equation}\label{esymh} 
e_{S^{k}(\tilde U)}=2 \Pi_{j=0}^{n-3} (2j+1)=2( 1\cdot 3\cdot\hdots\cdot k)\ ,
\end{equation}
\begin{equation}\label{esymb}
e_{S^{k}(U)}= \Pi_{j=0}^{n-3} (2j+1)= 1\cdot 3\cdot\hdots\cdot k\ .
\end{equation}
 
\end{thry}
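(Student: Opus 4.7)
The plan is to first establish (\ref{esymh}) on the oriented Grassmannian $\tilde G_2(V)$, where the tautological bundle $\tilde U$ carries a canonical complex structure, and then descend to (\ref{esymb}) on $G_2(V)$ via the $2:1$ orientation cover $p:\tilde G_2(V)\to G_2(V)$.

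First I would exploit that the oriented real rank-2 bundle $\tilde U$ is canonically a complex line bundle $L$, with $c_1(L)=e(\tilde U)$. Complexifying gives $\tilde U\otimes_\R\C=L\oplus\bar L$, and therefore
$$S^k(\tilde U)\otimes_\R\C\;\cong\;\bigoplus_{i=0}^{k}L^{\otimes i}\otimes\bar L^{\otimes(k-i)}.$$
Since $k=2n-5$ is odd, complex conjugation has no fixed summand; it pairs $L^i\otimes\bar L^{k-i}$ with $L^{k-i}\otimes\bar L^i$, which, using a Hermitian metric identifying $\bar L\cong L^{-1}$, corresponds to the pair $(L^{\otimes m},L^{\otimes -m})$ for $m=k-2i\in\{1,3,\ldots,k\}$. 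The real form of each pair is a real rank-2 subbundle $R_m\subset S^k(\tilde U)$ underlying the complex line bundle $L^{\otimes m}$, and
$$S^k(\tilde U)\;=\;\bigoplus_{j=0}^{n-3}R_{2j+1}.$$
Multiplicativity of the Euler class (Remark \ref{W}) together with $e(R_m)=c_1(L^{\otimes m})=m\,e(\tilde U)$ then gives
$$e_{S^k(\tilde U)}=\prod_{j=0}^{n-3}(2j+1)\cdot e(\tilde U)^{n-2}=(1\cdot 3\cdots k)\cdot e_{\tilde U^{\oplus(n-2)}},$$
and Proposition \ref{powere} closes out (\ref{esymh}).

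For (\ref{esymb}) I would pull back along the double cover $p:\tilde G_2(V)\to G_2(V)$, using $p^*U=\tilde U$ and so $p^*S^k(U)=S^k(\tilde U)$. The key observation is that the canonical relative orientation $\theta_{\mathrm{can}}$ of Proposition \ref{canor} is manifestly a square, hence insensitive to flipping the orientation of $U$. Concretely, its pullback to $\tilde G_2(V)$ coincides with the absolute orientation coming from the orientations of $\tilde G_2(V)$ and of $\tilde U$, and this identification is the same on both sheets of the cover. Consequently, every regular zero $W\in Z(s)$ of a section $s\in\Gamma(G_2(V),S^k(U))$ lifts to two zeros $W^\pm\in Z(p^*s)$ with identical signs. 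Summing signs gives $\deg(S^k(\tilde U))=2\deg(S^k(U),\theta_{\mathrm{can}})$, which upon comparison with (\ref{esymh}) yields (\ref{esymb}).

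The main obstacle is sign bookkeeping in the splitting step: one must check that the orientation of each $R_m$ induced by its complex structure as $L^{\otimes m}$ is compatible, through the canonical isomorphisms of Lemmas \ref{dettp} and \ref{detsym}, with the orientation of $S^k(\tilde U)$ coming from $\det(S^k(\tilde U))\cong\det(\tilde U)^{\otimes k(k+1)/2}$. Otherwise the overall sign of the Euler class might flip. I would reduce this to a single fibrewise computation and cross-check the final sign by evaluating $e_{S^k(\tilde U)}$ on an explicit section (for instance, one derived from a carefully chosen $f$, in the spirit of the section $\eta$ used in Proposition \ref{powere}) whose zero locus can be enumerated by hand.
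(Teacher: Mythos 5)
Your route is essentially the paper's: one complexifies the fibre $S^k(W)$, splits $S^k(\tilde U)$ under the complex structure induced by an inner product on $V$ into line bundles that are tensor powers of the Hermitian line bundle $\tilde\Lambda$ underlying $\tilde U$, and concludes by multiplicativity (Remark \ref{W}) together with Proposition \ref{powere}; and your descent to $G_2(V)$ by lifting each regular zero to two zeros of equal sign is equivalent to the paper's cohomological argument via functoriality of the Euler class and $c^*([G_2(V)]')=2[\tilde G_2(V)]'$. The only packaging difference is that you work with the real rank-two subbundles $R_m$, $m=1,3,\dots,k$, cut out by conjugation, whereas the paper uses the isomorphism $S^k(\tilde U)\simeq\bigoplus_{l=0}^{s}\tilde\Lambda^{\otimes(k-4l)}$ of Lemma \ref{orwithJ}, in which some exponents are negative.

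The genuine shortfall is the step you explicitly defer, and it is exactly where the paper locates the difficulty of the theorem: the comparison between the orientation your splitting induces on $S^k(\tilde U)$ (direct sum of the complex orientations of the $R_m$, in a chosen order) and the canonical orientation of $S^k(\tilde U)$ coming from Lemma \ref{detsym}, which is the orientation that defines the sign of $e_{S^k(\tilde U)}$ and the canonical relative orientation downstairs. This discrepancy is not automatically trivial: in the paper's version it equals $(-1)^{\frac{s(s+1)}{2}}$ with $s=n-3$ (Remark \ref{coco}), and the final sign is $+$ only because this factor cancels against the parity of the number of negative exponents in $\prod_{l=0}^{s}(k-4l)$, cf. formula (\ref{ee}). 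Until you carry out the analogous fibrewise check for the $R_m$-splitting (your proposed ``single fibrewise computation''), your argument determines $e_{S^k(\tilde U)}$ only up to sign, i.e. it yields $|e_{S^k(\tilde U)}|=2(1\cdot 3\cdots k)$ but not the signed equalities (\ref{esymh}) and (\ref{esymb}); and the sign is precisely what makes the intrinsic elliptic/hyperbolic count meaningful. Your cross-check idea (evaluating on an explicit section with enumerable zeros) would also work, but it must actually be executed with the same orientation conventions (Lemmas \ref{dettp} and \ref{detsym}, Remark \ref{orhom}) used to define $\theta_{\mathrm{can}}$, since a single inconsistent convention flips the answer.
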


 In order to prove this result we need a preparation. Let $W$ be real plane endowed with a complex structure $J\in\End(W)$, and let $s:=n-3$, i.e., $k=2s+1$. The symmetric power ${\cal S}:=S^k(W)$ comes with a natural complex structure ${\cal J}$ defined on symmetric monomials  by ${\cal J} (w_1\vee\dots \vee w_k):={\cal J} (w_1)\vee\dots \vee {\cal J}(w_k)$. We want a simple description of the complex vector space $({\cal S},{\cal J})$ in terms of the complex line $L:=(W,J)$.

The complexification $W^\C$ decomposes as $W^\C=W^{10}\oplus W^{01}$, where the two summands are the $i$, respectively $-i$ eigenspaces of the $\C$-linear extension $J^\C$ of $\C$.

Accordingly, we get a decomposition
$$S^k(W)\otimes\C=S^k(W^\C)=\bigoplus_{j=0}^k  (W^{10})^{\otimes(k-j)}\otimes  (W^{01})^{\otimes j}\ .
$$
Note that ${\cal J}^\C$ acts on the summand $(W^{10})^{\otimes(k-j)}\otimes  (W^{01})^{\otimes j}$ by multiplication with $i^{k-j}(- i)^j=(-1)^j i^{k}=(-1)^{j+s} i$.

This shows that 
$${\cal S}^{10}=\left\{\begin{array}{ccc}
\bigoplus_{l=0}^s (W^{10})^{\otimes(k-2l)}\otimes  (W^{01})^{\otimes 2l}&\rm when& s\hbox{ is even}\\
\bigoplus_{l=0}^s (W^{10})^{\otimes(k-(2l+1))}\otimes  (W^{01})^{\otimes (2l+1)}&\rm when& s\hbox{ is odd}
\end{array} \right. \ .
$$

Since the projections $({\cal S},{\cal J})\to {\cal S}^{10}$, $({\cal S},-{\cal J})\to {\cal S}^{01}$, $L=(W,J)\to W^{10}$, $\bar L\to W^{0,1}$  are   isomorphisms of $\C$-vector spaces, and taking into account that $\dim_\C({\cal S},{\cal J})=\frac{k+1}{2}$ is even when $s$ is odd, we obtain
\begin{lm} With the notations and definitions above there is a canonical $\R$-linear isomorphism 
$$({\cal S},{\cal J})  \textmap{\simeq} 
\bigoplus_{l=0}^s  L^{\otimes(k-2l)}\otimes  \bar L^{\otimes 2l} 
$$ 
which is $\C$-linear when $s$ is even and $\C$-anti-linear when $s$ is odd. In both cases this canonical isomorphism is orientation preserving with respect to the complex orientation of ${\cal S}$.
\end{lm}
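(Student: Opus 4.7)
The plan is to build the isomorphism from the eigenspace decomposition already computed, using two elementary $\R$-linear projection maps and a case distinction on the parity of $s$.

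\textbf{Step 1.} For any $\R$-vector space $(V,J)$ with a complex structure, the inclusion $V\hookrightarrow V^\C=V\otimes_\R\C$ composed with the eigenprojections associated with $J^\C$ gives two $\R$-linear isomorphisms $\pi^{10}\colon (V,J)\to V^{10}$ and $\pi^{01}\colon (V,J)\to V^{01}$, namely $v\mapsto \frac{1}{2}(v\otimes 1\mp (Jv)\otimes i)$. Equipping $V^{10}$ and $V^{01}$ with the $\C$-structure induced from the $\C$-factor (which on $V^{10}$ coincides with $J^\C|_{V^{10}}$ and on $V^{01}$ with its opposite), one checks directly that $\pi^{10}$ is $\C$-linear while $\pi^{01}$ is $\C$-anti-linear. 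Applying this to $(W,J)$ yields the canonical $\C$-linear isomorphism $L\cong W^{10}$; applying it to $(W,-J)$ yields $\bar L\cong W^{01}$. Tensoring produces $\C$-linear identifications $L^{\otimes(k-2l)}\otimes\bar L^{\otimes 2l}\cong (W^{10})^{\otimes(k-2l)}\otimes(W^{01})^{\otimes 2l}$ for every $l$. Applying the same construction to $({\cal S},{\cal J})$ gives the $\C$-linear projection $\pi^{10}\colon({\cal S},{\cal J})\to {\cal S}^{10}$ and the $\C$-anti-linear projection $\pi^{01}\colon({\cal S},{\cal J})\to {\cal S}^{01}$, both of which are $\R$-linear isomorphisms.

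\textbf{Step 2.} I combine these with the parity analysis carried out just before the lemma. When $s$ is even, the summands indexed by $j=2l$ ($l=0,\dots,s$) lie in ${\cal S}^{10}$, so the composition of $\pi^{10}$ with the $\C$-linear identifications of Step 1 produces a $\C$-linear $\R$-isomorphism $({\cal S},{\cal J})\to\bigoplus_{l=0}^s L^{\otimes(k-2l)}\otimes\bar L^{\otimes 2l}$. When $s$ is odd, those same summands lie in ${\cal S}^{01}$, so one uses $\pi^{01}$ instead; the resulting isomorphism is then $\C$-anti-linear.

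\textbf{Step 3.} The orientation claim is automatic in the $\C$-linear case. For the $\C$-anti-linear case I would use the following general fact: a $\C$-anti-linear isomorphism $g\colon V\to V'$ between complex vector spaces of $\C$-dimension $d$ alters the complex orientations by the sign $(-1)^d$. Indeed, transporting a real basis $(e_1,Je_1,\dots,e_d,Je_d)$ of $V$ yields the basis $(g(e_1),-Jg(e_1),\dots,g(e_d),-Jg(e_d))$ of $V'$, which differs from the canonical complex-oriented basis $(g(e_1),Jg(e_1),\dots)$ by $d$ sign changes. In our setting $d=\dim_\C({\cal S},{\cal J})=\frac{k+1}{2}=s+1$, which is even exactly when $s$ is odd. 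Hence $(-1)^{s+1}=1$, and the orientation is preserved on the nose in the $\C$-anti-linear case as well. The main bookkeeping obstacle is matching the correct projection to the parity of $s$ and then verifying that the $\C$-anti-linearity of $\pi^{01}$ is precisely compensated by the evenness of $s+1$ in the orientation computation, which is exactly why the parity remark immediately preceding the lemma is highlighted.
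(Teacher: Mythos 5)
Your argument is correct and follows essentially the same route as the paper: you use the eigenspace decomposition of $S^k(W)\otimes\C$ and the projections onto ${\cal S}^{10}$ (for $s$ even) respectively ${\cal S}^{01}$ (for $s$ odd), together with the $\C$-linear identifications $L\simeq W^{10}$, $\bar L\simeq W^{01}$. Your explicit orientation bookkeeping via the sign $(-1)^d$ of an anti-linear isomorphism, with $d=\dim_\C({\cal S},{\cal J})=s+1$ even when $s$ is odd, is exactly the content of the paper's remark that $\frac{k+1}{2}$ is even in that case.
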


Using a $J$-Hermitian metric $h$ on $W$ we obtain an identification $\bar L=L^\vee$, hence

\begin{lm} \label{orwithJ} Let $L=(W,J,h)$ be a Hermitian line. There exists a  canonical $\R$-linear isomorphism 
$$({\cal S},{\cal J})  \textmap{\simeq} 
\bigoplus_{l=0}^s  L^{\otimes(k-4l)}   
$$ 
which is $\C$-linear when $s$ is even and $\C$-anti-linear when $s$ is odd. In both cases this canonical isomorphism is orientation preserving with respect to the complex orientation of ${\cal S}$.
\end{lm}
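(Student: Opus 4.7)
The plan is to derive this from the preceding lemma by using the Hermitian metric $h$ to pair off factors of $L$ against factors of $\bar L$.

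First, I would invoke the preceding lemma to obtain a canonical $\R$-linear isomorphism
$$({\cal S},{\cal J})\textmap{\simeq}\bigoplus_{l=0}^s L^{\otimes(k-2l)}\otimes\bar L^{\otimes 2l},$$
$\C$-linear when $s$ is even, $\C$-anti-linear when $s$ is odd, and orientation preserving with respect to the complex orientation on $\cal S$.

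Next, the Hermitian metric $h$ gives a canonical $\C$-linear isomorphism
$$\beta:\bar L\textmap{\simeq} L^\vee,\qquad \bar v\mapsto h(\cdot,v),$$
which is orientation preserving since any $\C$-linear isomorphism between complex lines is. Tensoring $2l$ copies of $\beta$ identifies $\bar L^{\otimes 2l}$ with $(L^\vee)^{\otimes 2l}$. Then $\min(k-2l,2l)$ applications of the canonical evaluation pairing $L\otimes L^\vee\textmap{\simeq}\C$ (which is $\C$-linear and orientation preserving) yield
$$L^{\otimes(k-2l)}\otimes (L^\vee)^{\otimes 2l}\textmap{\simeq} L^{\otimes(k-4l)},$$
with the convention that $L^{\otimes m}=(L^\vee)^{\otimes (-m)}$ for $m<0$. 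Composing these identifications summand by summand with the isomorphism from the preceding lemma produces the asserted isomorphism.

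The $\C$-(anti-)linearity and the orientation preservation are inherited directly from the preceding lemma: every identification introduced in the second step is $\C$-linear and orientation preserving (any $\C$-linear isomorphism between complex vector spaces of the same complex dimension has positive real determinant, since $\det_\R(\phi)=|\det_\C(\phi)|^2$), so it affects neither the $\C$-linear character of the composite nor the induced real orientation. The main point requiring care, rather than a real obstacle, is to fix once and for all the conventions for the evaluation $L\otimes L^\vee\to\C$ and for negative tensor powers of $L$ so that the various identifications compose consistently in each summand; once these conventions are fixed, the argument is essentially formal.
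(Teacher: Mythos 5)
Your proposal is correct and follows essentially the same route as the paper, which simply uses the Hermitian metric to identify $\bar L$ with $L^\vee$ and then cancels factors against powers of $L$ in the decomposition from the preceding lemma. Your elaboration of the evaluation pairings, the convention for negative tensor powers, and the observation that $\C$-linear isomorphisms have positive real determinant just makes explicit what the paper leaves implicit.
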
 

\begin{re} The complex  orientation of the symmetric power ${\cal S}=S^k(W)$ defined by ${\cal J}$ differs from the canonical orientation used in the proof of   Lemma  \ref{detsym}  by the factor $(-1)^{\frac{s(s+1)}{2}}$.
\end{re} 
\begin{proof} Let $(w_1,w_2)$ be a basis of $W$ with $w_2=Jw_1$. The canonical orientation of  $S^k(W)$ used in the proof of Lemma  \ref{detsym} is defined by the basis %
$$(w_1^k,w_1^{k-1}w_2,\dots,w_1w_2^{k-1},w_2^k)\ ,$$ whereas the complex orientation is defined by the basis 
$$(w_1^k,{\cal J}w_1^k,w_1^{k-1}w_2, {\cal J}(w_1^{k-1}w_2),\dots,w_1^{k-s}w_2^s,{\cal J}(w_1^{k-s}w_2^s))\ .$$
It suffices to note that ${\cal J}(w_1^{k-i}w_2^i)=(-1)^iw_1^i w_2^{k-i}$ and that the permutation
$$\left(\begin{array}{ccccccccc}
0&1&2&3&4&5&\dots&k-1&k\\
0&k&1&k-1&2&k-2&\dots&s&s+1
\end{array}\right)
$$
is always even, so the two orientations differ by $(-1)^{\sum_{i=0}^s i}$.
\end{proof}

Taking this factor into account we obtain:

\begin{re} \label{coco} Let $L=(W,J,h)$ be a Hermitian line. If we endow $S^k(W)$ with the canonical orientation  used in  Lemma  \ref{detsym}, the isomorphism given by Lemma \ref{orwithJ} changes the orientation by the factor $(-1)^{\frac{s(s+1)}{2}}$.
\end{re}
{\ }\\
{\it Proof} of Theorem \ref{esym}. We choose an inner product on $V$, and obtain an induced inner product on every oriented plane $W\in \tilde G_2(V)$.  In this way the tautological bundle $\tilde U$ becomes an $\SO(2)$ bundle, so it defines a Hermitian line bundle, which we  denote by $\tilde \Lambda$.

Using fibrewise the canonical isomorphism given by Lemma \ref{orwithJ}, and taking into account Remark \ref{coco}, we obtain a  bundle isomorphism
$$S^k(\tilde U)\textmap{\simeq}  
\bigoplus_{l=0}^s  \tilde\Lambda^{\otimes(k-4l)}  
$$ 
which  multiplies the orientation by the factor $(-1)^{\frac{s(s+1)}{2}}$.  Since $c_1(\tilde \Lambda) =e(\tilde U)$ we can apply Proposition \ref{powere} and get
\begin{equation}\label{ee}
e_{S^k(\tilde U)}= (-1)^{\frac{s(s+1)}{2}} 
2\prod_{l=0}^s  (k-4l)  \ .
\end{equation}

Note that the set  
$\{| k-4l|\ |\ 0\leq l\leq s\}$
 coincide  with the set $\{1,3,\dots,k\}$ of odd numbers between $1$ and $k$. On the other hand, the number of negative factors in the product on the right in (\ref{ee}) is
$$\left\{
\begin{array}{ccc}  {\frac{s}{2}} &\rm when & s \hbox{ is even}\\ 
 {\frac{s+1}{2}}&\rm when& s\hbox{ is odd}\end{array}\right. \ .
$$

But this number has the same parity as $\frac{s(s+1)}{2}$.
This proves the first formula.  The second formula is proved using the double cover $c:\tilde G_2(V)\to G_2(V)$, the functoriality of the Euler class (formula (\ref{func})),  and the obvious equality $c^*([G_2(V)]')=2[\tilde G_2(V)]'$ where $[G_2(V)]'$, $[\tilde G_2(V)]'$ are the canonical generators of the cyclic groups $H^{2(n-2)}(G_2(V),\oo_{S^k(U)})$, $H^{2(n-2)}(\tilde G_2(V),\Z)$.
\qed
\vspace{3mm}

By Remark $\ref{dual}$ one has $e_{S^k(U^\vee)}=e_{S^k(U)}$. Taking into account Proposition \ref{p2} we obtain the following lower bound for the number of lines on real hypersurfaces:

\begin{co} Let $V$ be an $n$-dimensional real vector space, $k:=2n-5$, $f\in S^k(V^\vee)$, and let $Z(\sigma_f)$ be the corresponding real hypersurface  in $\P(V)$. Suppose that the set ${\cal R}$ of  lines in $Z(\sigma_f)$ is finite. Then, denoting by $\mathrm{mult}_l(s_f)$ the  multiplicity of a real line $l\in{\cal R}$ regarded as   zero of the section $s_f$, we have the estimate
$$\sum_{l\in {\cal R}} \mathrm{mult}_l(s_k)\geq 1 \cdot 3\cdot\hdots\cdot  k \ .$$
\end{co}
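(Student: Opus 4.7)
The corollary is essentially an assembly of three prior results: the bijection between real lines and zeros of $s_f$, the general multiplicity lower bound (Proposition \ref{p2}), and the explicit Euler class computation (Theorem \ref{esym}). So the plan is simply to verify that the hypotheses of Proposition \ref{p2} hold in this setting and then plug in.

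\textbf{Step 1: Set up the section.} I would first recall that projective lines $l = \P(W) \subset Z(\sigma_f)$ correspond bijectively to the 2-planes $W \subset V$ on which $f$ vanishes identically, and these in turn are precisely the zeros of the section $s_f \in \Gamma(G_2(V), S^k(U^\vee))$ defined by $s_f(W) = f|_{W^k}$. Under the bijection, the local multiplicity $\mult_l(s_f)$ at a line $l$ is exactly the algebraic multiplicity of $s_f$ at the corresponding point of $G_2(V)$, as defined in Section 2.

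\textbf{Step 2: Check the dimensional and orientability hypotheses.} Since $k = 2n-5$, one has $\rk(S^k(U^\vee)) = k+1 = 2n-4 = 2(n-2) = \dim(G_2(V))$, so Proposition \ref{p2} applies dimensionally. Moreover, by the corollary to Proposition \ref{canor}, the bundle $S^k(U^\vee)$ on $G_2(V)$ is canonically relatively oriented, so the absolute degree $|\deg|(S^k(U^\vee))$ is well-defined.

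\textbf{Step 3: Apply the general lower bound and invoke the Euler class computation.} By hypothesis, the set $\mathcal{R}$ of real lines in $Z(\sigma_f)$ is finite, so $s_f$ has isolated zeros. Proposition \ref{p2} then yields
$$\sum_{l \in \mathcal{R}} \mult_l(s_f) \geq |\deg|(S^k(U^\vee)).$$
By Remark \ref{dual} one has $e_{S^k(U^\vee)} = e_{S^k(U)}$, and by Theorem \ref{esym} this equals $1 \cdot 3 \cdots k$ under the canonical trivialization $\theta_{\mathrm{can}}$, hence $|\deg|(S^k(U^\vee)) = 1 \cdot 3 \cdots k$. Combining yields the claimed inequality.

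\textbf{Main obstacle.} There is no real obstacle left at this stage: all the substance — relative orientability of $S^k(U^\vee)$, the translation between algebraic multiplicity and local degree, and the explicit evaluation of the canonical Euler class — has already been done. The only subtlety to flag explicitly is that $\mult_l(s_f) \geq 1$ for every $l \in \mathcal R$ (which holds because $l$ is a zero of $s_f$), so finiteness of $\mathcal R$ is precisely the ``isolated zeros'' hypothesis of Proposition \ref{p2}.
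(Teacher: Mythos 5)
Your proposal is correct and follows essentially the same route as the paper: the paper's own argument is exactly the assembly of the bijection between lines and zeros of $s_f$, the canonical relative orientation from Proposition \ref{canor}, the identity $e_{S^k(U^\vee)}=e_{S^k(U)}$ from Remark \ref{dual}, Theorem \ref{esym}, and Proposition \ref{p2}. Your explicit checks of the rank/dimension equality and of the passage from finiteness of $\mathcal{R}$ to isolated zeros are exactly the (implicit) verifications the paper relies on, so there is nothing to add.
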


\section{Computation of the intrinsic derivative }

Let $V$ be a real $n$-dimensional vector space, and for $m\leq n$ let $G_m(V)$  be the Grassmannian of $m$-dimensional linear subspaces of $V$. An element $f\in S^k(V^\vee)$ defines a section $s_f\in\Gamma(G_m(V),S^k(U^\vee))$, where $U$ denotes the tautological bundle on $G_m(V)$. The value $s_f(W)$ at  a point $W\in G_m(V)$  is just the restriction   $\resto{f}{W^k}$.

Equivalently, we can identify the space $S^k(V^\vee)$ (the bundle $S^k(U^\vee)$)  with the space $\R[V]_k$ (the bundle $\R[U]_k$) of homogeneous polynomials of degree $k$ on the space $V$ (the bundle $U$).
Let $q\in \R[V]_k$ be the homogeneous polynomial corresponding to $f$.
We will compute the intrinsic derivative 
$$
D_W(s_f):T_W(G_m(V))=\Hom(W, {V}/{W})\map \R[W]_k
$$
of this section at a point $W$ in the zero locus $Z(s_f)\subset G_m(V)$.

Let $W'$ be a complement of $W$ in $V$. The direct sum decompostion $V=W\oplus W'$ defines a chart $h_{W,W'}:G_{W,W'}\to \Hom(W,W')$ of $G_m(V)$ around $W$, whose inverse is the map $h_{W,W'}^{-1}:\Hom(W,W')\to G_{W,W'}$ sending a linear map $\phi:W\to W'$ to its graph $\Gamma_\phi\subset W\times W'\simeq W\oplus W'=V$.
The  direct sum decomposition $V=W\oplus W'$ induces a   trivialization 
$$\tau_{W,W'}:\resto{U}{G_{W,W'}}\to G_{W,W'}\times W 
$$
of the bundle $U$ on the open set  $G_{W,W'}\subset G_m(V)$, which is defined by
$$\tau_{W,W'}(T,t):=\mathrm{pr}_W(t)\ .
$$
Here $T\in G_{W,W'}$, $t\in T=U_T$ and $\mathrm{pr}_W:V\to W$ stands for the projection on $W$ with respect to the direct sum decompostion $V=W\oplus W'$. Note that the restriction of this local trivialization to the fibre $U_W=W$ is the identity.

If $T=h_{W,W'}^{-1}(\phi)$, i.e., $T$ is the graph of $\phi$, then  this trivialization identifies the fibre $U_T=T$ with $U_W=W$ via the isomorphism 
$$T\ni t\mapsto \mathrm{pr}_W(t)\in W\ .
$$

Note that the inverse of this isomorphism is $W\ni w\mapsto w+\phi(w)\in T$. Using the chart $h_{W,W'}$ and the local trivialization $\tau_{W,W'}$ we see that the section $s_f$ is  defined around the point $W\in G_m(V)$  by the map 
$\hat s_{f,W,W'}:\Hom(W,W')\to \R[W]_k$
 given by
$$\hat s_{f,W,W'}(\phi)(w):=q(w+\phi(w))\ .
$$
The intrinsic derivative $D_W(s_f)$ of $s_f$ at $W\in Z(s_f)$ can be identified with the derivative at $0\in\Hom(W,W')$ of $\hat s_{f,W,W'}$, i.e., with the linear map
$$\sigma_{f,W,W'}:\Hom(W,W')\to \R[W]_k \ ,
$$
defined by 
\begin{equation}\label{dif} \sigma_{f,W,W'}(\phi)(w)=d_w(q)(\phi(w))\ ,
\end{equation}
where $d_w(q)\in \Hom(V,\R)$ is the differential of $q$ at $w$.  Now define a linear map 
$$\fg:V/W\to \R[W]_{k-1}$$ by 
$$\fg([v])(w):=f(v,\underbrace{w,\dots,w}_\textrm{$k-1$})\ .
$$
Then one has  $d_w(q)(v)=k\fg([v])(w)$. With these notations  we have proved the following
\begin{lm} The intrinsic derivative $D_W(s_f)$ of $s_f$ at $W\in Z(s_f)$ is the map $D_W(s_f):\Hom(W,V/W)\to \R[W]_k$  given by
\begin{equation}\label{conclusion}D_W(s_f)(\phi)(w)= k\fg(\phi(w))(w)\ .
\end{equation}
\end{lm}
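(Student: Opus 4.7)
The plan is to compute $D_W(s_f)$ explicitly by choosing an auxiliary complement $W'$ of $W$ in $V$ and working in the resulting affine chart on $G_m(V)$. The chart $h_{W,W'}:G_{W,W'}\to\Hom(W,W')$, together with the trivialization $\tau_{W,W'}$ of $\resto{U}{G_{W,W'}}$ coming from the projection $\mathrm{pr}_W:V\to W$, will serve as a coordinate system adapted to the point $W$. Two features of this setup make the computation clean: the chart sends $0$ to $W$, and the trivialization $\tau_{W,W'}$ restricts to the identity on the fiber $U_W=W$. These two compatibilities guarantee that the derivative at $0$ of the local coordinate expression of $s_f$ coincides with the intrinsic derivative $D_W(s_f)$, with no connection-like correction terms; this is the only potentially subtle point, but it is settled at the outset by the choice of trivialization.

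Next I would write down the local expression. A point $T\in G_{W,W'}$ is the graph $\Gamma_\phi$ of some $\phi\in\Hom(W,W')$, and a vector $t\in T$ is of the form $t=w+\phi(w)$ with $w\in W$. Denoting by $q\in\R[V]_k$ the homogeneous polynomial associated with $f$, the induced local trivialization of $S^k(U^\vee)$ identifies $s_f$ with the map $\hat s_{f,W,W'}:\Hom(W,W')\to\R[W]_k$ sending $\phi$ to $w\mapsto q(w+\phi(w))$. Since $W\in Z(s_f)$ we have $\resto{q}{W}\equiv 0$, hence $\hat s_{f,W,W'}(0)=0$; differentiating at $\phi=0$ the chain rule then yields
$$
(D_0\hat s_{f,W,W'})(\phi)(w)=d_w(q)(\phi(w)),
$$
where $d_w(q)\in V^\vee$ is the differential of $q$ at the point $w\in W$.

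Finally, I would rewrite $d_w(q)$ in the symmetric-form language. The polarization identity for the degree-$k$ symmetric form $f$ gives $d_w(q)(v)=kf(v,w,\dots,w)$. Because $f$ vanishes on $W^k$, the right-hand side is zero when $v\in W$, so the assignment $v\mapsto f(v,w,\dots,w)$ descends to $V/W$ and is by definition $\fg([v])(w)$. Composing with the canonical identification $\Hom(W,W')\cong\Hom(W,V/W)$ induced by the projection $V\to V/W$ (which is the same isomorphism implicit in the identification of $T_W(G_m(V))$ with $\Hom(W,V/W)$), I obtain the claimed formula $D_W(s_f)(\phi)(w)=k\fg(\phi(w))(w)$. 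As anticipated, the only mild subtlety lies in matching local and intrinsic derivatives, and this is handled by the adapted choice of $\tau_{W,W'}$.
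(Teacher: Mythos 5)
Your proposal is correct and follows essentially the same route as the paper: the same chart $h_{W,W'}$ and trivialization $\tau_{W,W'}$, the same local expression $\hat s_{f,W,W'}(\phi)(w)=q(w+\phi(w))$, identification of $D_W(s_f)$ with the derivative at $0$, and the polarization identity $d_w(q)(v)=k f(v,w,\dots,w)$. Your added remark that $f|_{W^k}=0$ makes $v\mapsto f(v,w,\dots,w)$ descend to $V/W$ only makes explicit a point the paper leaves implicit in the definition of $\fg$.
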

%

%
\vspace{4mm}
\paragraph{\bf The determinant of $D_W(s_f)$}
Let $W\in G_2(V)$ be a zero of $s_f$, and let  $W'$ be a complement  of $W$ in $V$. Fix bases $(w_1,w_2)$, $(w_1',\dots,w'_{n-2})$ in $W$ and $W'$ respectively. Using these bases we identify $\Hom(W,W')$ with $M_{n-2,2}(\R)$,   and consider the basis  $(E_{ij})_{\substack{1\leq i\leq n-2\\1\leq j\leq 2}}$. Then a matrix $M\in M_{n-2,2}(\R)$ decomposes as 
$$M =\sum_{\substack{1\leq i\leq n-2\\ 1\leq j\leq 2}} m_{ij} E_{ij}\ .$$
 Using the convention explained in Remark \ref{orhom}, the orientation induced by the two bases $(w_1,w_2)$, $(w'_1,\dots,w'_{n-2})$ on $\Hom(W,W')$ is  defined by the ordered basis $(E_{11},E_{12},E_{21},E_{22},\dots,E_{n-2,1},E_{n-2,2})$. 

The linear map $\fg$ can be regarded as   a system $(Q_1,\dots,Q_{n-2})$ of  $n-2$ homogeneous polynomials of degree $k-1$ on $W$.  Let $x_i$ be the coordinates induced by the basis $(w_1,w_2)$. Writing $Q_i(x_1,x_2)=\sum_{l=0}^{k-1} a_{l,i} x_1^{k-1-l} x_2^l$ we see that the matrix $F\in M_{k,n-2}(\R)$ associated with $\fg$ with respect to our choices of bases is
$$F=\left(\begin{matrix}
a_{0,1}&a_{0,2}&\hdots&a_{0,n-2}\\
a_{1,1}&a_{1,2}&\hdots& a_{1,n-2}\\
a_{2,1}&a_{2,2}&\hdots&a_{2,n-2} \\
\vdots &\vdots&\hdots&\vdots \\
a_{k-1,1}&a_{k-1,2}&\hdots&a_{k-1,n-2} \\
\end{matrix}
\right) \ .
$$

We are interested in the determinant of the linear  map
$$\sigma:M_{n-2,2}(\R)\map \R[x_1,x_2]_k\ ,
$$ 
induced by $\frac{1}{k}\sigma_{f,W,W'}$ via the identifications explained above. One has
$$\sigma(M)=  (Q_1(x_1,x_2),\dots,Q_{n-2}(x_1,x_2))M\left(\begin{array}{c} x_1\\ x_2\end{array}\right)\ .$$
In particular
$ \sigma(E_{ij})=  Q_i x_j\ ,
$
so that
$$\sigma(E_{i1})=\sum_{l=0}^{k-1} a_{l,i} x_1^{k-l} x_2^l  \ ,\ \sigma(E_{i2})=\sum_{l=0}^{k-1} a_{l,i} x_1^{k-l-1} x_2^{l+1}\ .
$$

Now write  $\sigma(E_{i1})=\sum b_{l,i} x_1^{k-l} x_2^l$ and  $\sigma(E_{i2})=\sum c_{l,i} x_1^{k-l} x_2^l$. Then we get the following formulae which give the columns of the matrix $\Sigma$ representing  $\sigma$ with respect to the  bases $(E_{11},E_{12},E_{21},E_{22},\dots,E_{n-2,1},E_{n-2,2})$, $(x_1^k,x_1^{k-1}x_2,\dots,x_2^k)$:
$$b_{l,i}=a_{l,i} \hbox { for }0\leq l\leq k-1\ ,\ b_{k,i}=0\ ,
$$
$$c_{l,i}=a_{l-1,i} \hbox{ for } 1\leq l\leq k\ ,\ c_{0,i}=0\ .
$$
Therefore we find:
\begin{equation}\label{Sigma}\Sigma=\left(\begin{matrix}
a_{0,1}&0&\hdots&a_{0,n-2}&0\\
a_{1,1}&a_{0,1}&\hdots& a_{1,n-2}&a_{0,n-2}\\
a_{2,1}&a_{1,1}&\hdots&a_{2,n-2}& a_{1,n-2}\\
\vdots &\vdots&\hdots&\vdots&\vdots\\
a_{k-1,1}&a_{k-2,1}&\hdots&a_{k-1,n-2}&a_{k-2,n-2}\\
0&a_{k-1,1}&\hdots&0&a_{k-1,n-2}
\end{matrix}
\right) \  
\end{equation}
\\ \\
{\bf Example} ( $n=4$, $k=3$): In this case the matrix  of $\sigma$ is
\begin{equation}\label{SigmaEx}\Sigma=\left(\begin{matrix}
a_{0,1}&0&a_{0,2}&0\\
a_{1,1}&a_{0,1}& a_{1,2}&a_{0,2}\\
a_{2,1}&a_{1,1}&a_{2,2}& a_{1,2}\\
0&a_{2,1}&0&a_{2,2}
\end{matrix}
\right)  \ .
\end{equation}

Developing the determinant of $\Sigma$ with respect to the columns (1,3) we get
$$\det(\sigma)=-\left|\begin{matrix} a_{0,1}& a_{0,2}\\
a_{1,1}& a_{1,2}\end{matrix} \right| \left|\begin{matrix} a_{1,1}& a_{1,2}
\\ a_{2,1}& a_{2,2}\end{matrix} \right|+
\left|\begin{matrix} a_{0,1}& a_{0,2}\\
a_{2,1}& a_{2,2}\end{matrix} \right|^2  \ .
$$
Note that $\fg$ can also be regarded as a homogeneous polynomial of degree $k-1$ in the variables $x_1$, $x_2$ with coefficients in $\Hom(V/W,\R)$. Let $(u_1,u_2)$ be the coordinates in $V/W$ corresponding to the basis $(\bar w_1',\bar w_2')$. Then
 
$$\fg(x_1,x_2)=(a_{0,1}u_1 +a_{0,2} u_2)x_1^2+(a_{1,1}u_1+a_{1,2} u_2) x_1 x_2+ (a_{2,1}u_1+a_{2,2}u_2)x_2^2\ .
$$
 A simple  computation shows that 
$$\det(\sigma)=\frac{1}{16}\Delta_u(\Delta_x(\fg))\ ,
$$
where $\Delta_x$ denotes the discriminant of $\fg$ regarded as quadratic form in the variables $x_1$, $x_2$. This determinant is a quadratic form  in the variables $u_1$, $u_2$, and $\Delta_u$ stands for the discriminant of this quadratic form.
\begin{re} The matrix (\ref{SigmaEx}) is precisely the Sylvester matrix of the two polynomials $Q_1$, $Q_2$, so in the case $n=4$ the determinant $\det(\sigma)$ coincides with the resultant of these polynomials.
\end{re}
In the general case, the determinant $\det(\Sigma)$
 can be developed with respect to the system of  columns $(1,3,\dots,k)$. For a system $I=(i_1,i_2,\dots,i_{n-2})$ with $1\leq i_1<i_2<\dots<i_{n-2}\leq k$, let $m_{I}$ be the   minor of $F$ formed with  the corresponding   rows. Let  $\bar I=(j_1,j_2,\dots,j_{n-3})$ be    the complement of $(i_1,i_2,\dots,i_{n-2})$ in $\{1,2,\dots,k\}$, ordered in the obvious way. If $j_1\geq 2$ (or, equivalently, if $i_1=1 $) we define $\tilde I\subset \{1,2,\dots,k\}$ of length $n-2$ by
$$\tilde I:=(j_1-1,j_2-1,\dots,j_{n-3}-1,k)\ .
$$
Note that $m_I$ coincides with the minor of $\Sigma$ which is associated with the system of rows $(i_1,i_2,\dots,i_{n-2})$  and the fixed columns $(1,3,\dots,k)$, and that the complementary  minor in $\Sigma$  vanishes when $j_1=1$ and coincides with $m_{\tilde I}$ when $j_1\geq 2$.  Therefore
\begin{equation}\label{detformula} \det(\Sigma)= \sum_{\substack{I\subset \{1,\dots,k\}\\| I|=n-2,\ i_1=1}} (-1)^{\sum_{j=0}^{n-3} (2j+1)+ \sum_{l=1}^{n-2} i_l}\   m_{\raisebox{-0.35ex}{$\scriptstyle I$}} m_{\tilde I} $$
$$=\sum_{\substack{I\subset \{1,\dots,k\}\\| I|=n-2,\ i_1=1}} (-1)^{n+ \sum_{l=1}^{n-2} i_l}\  m_{\raisebox{-0.35ex}{$\scriptstyle I$}} m_{\tilde I} \ .
\end{equation}
We can interpret  this formula geometrically in the following way: 

Let $W$ be real 2-dimensional vector space, and $s\in\N$.\\

\begin{dt} An $s$-plane $I\in G_s(S^{2s-2}(W^\vee))$ will be called regular if the composition
$$\delta_I:I\otimes W^\vee \textmap{i\otimes\id_{W^\vee}} S^{2s-2}(W^\vee)\otimes W^\vee\textmap{m_W}  S^{2s-1}(W^\vee)
$$
is an isomorphism, where $i:W\to S^{2s-2}(W^\vee)$ denotes the inclusion of $I$ in $S^{2s-2}(W^\vee)$, and $m_W$ is the  epimorphism defined by  multiplication of homogeneous polynomials of degree $(2s-2)$ with linear forms.
\end{dt}

If $(f_1,\dots,f_s)$ is a basis of $I$ and $(x_1,x_2)$ a system of linear coordinates on $W$, we can write
$$f_i(x_1,x_2)=\sum_{l=0}^{2s-2} a_{l,i} x_1^{2s-2-l} x_2^l\in S^{2s-2}(W^\vee)\ ,
$$
and the matrix of $\delta_I$ with respect to the basis $(f_i\otimes x_j)_{\substack{1\leq i\leq s\\ 1\leq j\leq 2}}$ of $I\otimes W^\vee$ and the standard basis of $S^{2s-2}(W^\vee)$ is precisely the matrix $\Sigma$ written above.
Denote by $T$ the tautological $s$-bundle on  $G_s(S^{2s-2}(W^\vee)$). The family of morphisms $(\delta_I)_{I\in G_s(S^{2s-2}(W^\vee))}$ defines a bundle morphism
$$\delta: T\otimes \underline{W}^\vee\to S^{2s-1}(\underline{W}^\vee)\ ,
$$
where $\underline{W}$ denotes the trivial 2-bundle  with fibre $W$. The determinant $\det(\delta)$ of this bundle morphism  can be regarded as a section
$$\det(\delta)\in\Gamma(G_s(S^{2s-2}(W^\vee),\det(T^\vee)^{\otimes 2}\otimes\det(\underline{W}^\vee)^{\otimes -s}\otimes \det(S^{2s-1}(\underline{W}^\vee)))=
$$
$$=\Gamma(G_s(S^{2s-2}(W^\vee)),\det(T^\vee)^{\otimes 2}\otimes\det(\underline{W}^\vee)^{\otimes 2s} )\ .
$$

Note that the  real line bundle $\det(T^\vee)^{\otimes 2}\otimes\det(\underline{W}^\vee)^{\otimes 2s}$ is {\it canonically oriented} (using arbitrary fibrewise orientations of $T$ and $\underline{W}$), so that one can assign  a sign $\epsilon_I$ to every regular $s$-plane $I$ in an intrinsic way. This sign coincides with the sign of the determinant of the matrix $\Sigma$ associated with $I$ and the basis $(f_1,\dots,f_s)$, and  has a clear geometric interpretation. Consider the Plücker embedding 
$$\pi: G_s(S^{2s-2}(W^\vee))\to \P(E)\ ,$$
 where $E:=\wedge^s(S^{2s-2}(W^\vee))$.
The section $\det(\delta)$ is the pull-back of  a section $\rho$ in the line bundle $${\cal O}_{\P(E)} (2)\otimes \det(\underline{W}^\vee)^{\otimes 2s}$$
on the projective space $\P(E)$, which defines a quadric $Q\subset\P(E)$. Putting $k:=2s-2$, using the basis 
$$\left((x_1^{k-1-i_1} x_2^{i_1})\wedge \dots\wedge (x_1^{k-1-i_{s}} x_2^{i_{n-2}})\right)_{\substack{I\subset \{1,\dots,k\}\\| I|=s}}\ ,$$
and the corresponding system of linear coordinates $(\mu_I)_{\substack{I\subset \{1,\dots,k\}\\| I|=s}}$ of $E$,  the computation of $\det(\Sigma)$ above shows that $Q$ is defined by the equation 
$$\sum_{\substack{I\subset \{1,\dots,k\}\\| I|=n-2,\ i_1=1}} (-1)^{n+ \sum_{l=1}^{n-2} i_l}\   \mu_{\raisebox{-0.35ex}{$\scriptstyle I$}} \mu_{\tilde I}=0\ .
$$ 
The   complement of $Q$ in $\P(E)$ has two connected components:
$$\P(E)_\pm:=\{[\nu]\in \P(E)\ |\ \ \pm\hspace{-5mm}\sum_{\substack{I\subset \{1,\dots,2s-2\}\\| I|=s,\ i_1=1}} (-1)^{n+ \sum_{l=1}^{n-2} i_l}\  \mu_{\raisebox{-0.35ex}{$\scriptstyle I$}}(\nu) \mu_{\tilde I}(\nu)>0\}\ .
$$ 
For a regular $s$-plane $I$ we see that the sign $\epsilon_I$ is determined by the position of $\pi(I)$ with respect to this quadric. 
\\

We come back to the determinant of the intrinsic derivative $D_W(s_f)$. Take now $s=n-2$, and note that, with the notations introduced above,  one has  
$$m_I=\mu_I((\wedge^{n-2}\fg)(w_1'\wedge\dots\wedge w'_{n-2}))\ .$$  
Formula (\ref{detformula}) shows now that 
\begin{pr} The intrinsic derivative $D_W(s_f)$ is an isomorphism if and only if the linear map $\fg:V/W\to \R[W]_{k-1}=S^{k-1}(W^\vee)$ is injective and its image $I:=\im(\fg)$ is a regular $n-2$-plane.   If this is the case, the sign of $\det(D_W(s_f))$ with respect to the canonical orientations introduced above is determined by the component  of $\P(\wedge^{n-2}(S^{k-1}(W^\vee)))\setminus Q$ to which $\pi(I)$ belongs.
\end{pr}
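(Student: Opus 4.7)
The plan is to read off both assertions directly from the computations already performed in this section.

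For the first assertion, I would exploit the factorization of $D_W(s_f)$ implicit in formula (\ref{conclusion}). Writing $\Phi:\Hom(W,V/W)\to W^\vee\otimes S^{k-1}(W^\vee)$ for post-composition with $\fg$, and $m:W^\vee\otimes S^{k-1}(W^\vee)\to S^k(W^\vee)$ for the multiplication map $\psi\mapsto (w\mapsto \psi(w)(w))$, formula (\ref{conclusion}) reads $D_W(s_f)=k\,m\circ\Phi$. Since $W\neq 0$, the map $\Phi$ is injective if and only if $\fg$ is injective, in which case $\im(\Phi)=W^\vee\otimes I$ with $I:=\im(\fg)$, and the restriction of $m$ to $W^\vee\otimes I$ is by definition the map $\delta_I$. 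Both $\Hom(W,V/W)$ and $S^k(W^\vee)$ have dimension $2(n-2)=k+1$, so a dimension count forces $D_W(s_f)$ to be an isomorphism if and only if $\fg$ is injective and $I$ is regular, which is the first claim.

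For the sign assertion, I would invoke formula (\ref{detformula}):
$$\det(\Sigma)=\sum_{\substack{I\subset\{1,\dots,k\},\ |I|=n-2\\ i_1=1}}(-1)^{n+\sum_l i_l}\,m_{\raisebox{-0.35ex}{$\scriptstyle I$}}\,m_{\tilde I}.$$
The identification $m_I=\mu_I\bigl((\wedge^{n-2}\fg)(w_1'\wedge\cdots\wedge w'_{n-2})\bigr)$ recorded just before the proposition shows that the maximal minors of $F$ are precisely the Plücker coordinates of a nonzero representative of the projective point $\pi(I)\in\P(E)$. Comparing (\ref{detformula}) with the equation of $Q$ exhibits $\det(\Sigma)$ as the value of the quadratic form defining $Q$ at this very representative. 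Because that form is homogeneous of degree two, its sign at any nonzero representative depends only on the underlying projective point, and the components $\P(E)_\pm$ of $\P(E)\setminus Q$ were defined to be the loci on which this sign is respectively $+$ or $-$. Since $\det(D_W(s_f))$ differs from $\det(\Sigma)$ only by the positive scalar $k^{2(n-2)}$, its sign is therefore controlled by the component containing $\pi(I)$.

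The main obstacle is bookkeeping of signs and orientations: one must be sure that the sign conventions baked into (\ref{detformula}) — which depend on the matrix convention of Remark \ref{orhom} and on the ordering of the monomial basis of $S^k(W^\vee)$ used in Lemma \ref{detsym} — agree with the Plücker conventions used to set up the coordinates $\mu_I$ and the defining equation of $Q$. Since both $\Sigma$ and the equation of $Q$ were derived from the same expansion of $\det(\Sigma)$, this is automatic and requires no further sign correction; the only remaining check is that auxiliary choices (the complementary plane $W'$, the bases of $W$ and $V/W$) rescale $\det(\Sigma)$ by a positive factor, which preserves the sign and confirms that the sign depends only on $\pi(I)$ and the canonical orientations fixed above.
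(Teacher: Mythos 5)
Your proposal is correct and takes essentially the same route as the paper: your factorization $D_W(s_f)=k\,m\circ\Phi$ with $m$ restricted to $W^\vee\otimes I$ being $\delta_I$ is exactly the paper's observation that the matrix $\Sigma$ of $\frac{1}{k}D_W(s_f)$ is precisely the matrix of $\delta_{\im(\fg)}$, and your sign argument is the paper's, namely reading formula (\ref{detformula}) together with $m_I=\mu_I\bigl((\wedge^{n-2}\fg)(w_1'\wedge\dots\wedge w'_{n-2})\bigr)$ as the evaluation of the quadratic form defining $Q$ at a Pl\"ucker representative of $\pi(I)$. Your closing remark that changes of the auxiliary choices ($W'$ and the bases of $W$, $V/W$) rescale $\det(\Sigma)$ positively is the correct justification that the sign is the one attached to the canonical relative orientation, consistent with the conventions fixed in Remark \ref{orhom} and Lemma \ref{detsym}.
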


\end{document}